\documentclass[11pt]{article}
\usepackage[utf8]{inputenc}
\usepackage{amsmath} 
\usepackage{amsthm} 
\usepackage{amssymb}    
\usepackage{graphicx} 
\usepackage[dvips,letterpaper,margin=1in,bottom=0.7in]{geometry}
\usepackage{tensor}
\usepackage{physics} 
\usepackage{fancyhdr}
\usepackage{multicol}
\usepackage[inline]{enumitem}
\usepackage{comment}
\usepackage{float}
\usepackage{esint}
\usepackage{graphicx}
\usepackage[square,sort,comma,numbers]{natbib}\usepackage{afterpage}
\usepackage{pgf,tikz,pgfplots}
\usetikzlibrary{decorations.markings}
\usepackage{placeins}
\usepackage[bottom]{footmisc}
\usetikzlibrary{arrows}
\pgfplotsset{compat=1.14}
\definecolor{ffffff}{rgb}{1,1,1}
\definecolor{wrwrwr}{rgb}{0.3803921568627451,0.3803921568627451,0.3803921568627451}
\definecolor{aqaqaq}{rgb}{0.6274509803921569,0.6274509803921569,0.6274509803921569}
\definecolor{rvwvcq}{rgb}{0.08235294117647059,0.396078431372549,0.7529411764705882}

\newtheorem{thm}{Theorem}[section]
\newtheorem{prop}[thm]{Proposition}
\newtheorem{lem}[thm]{Lemma}
\newtheorem{cor}[thm]{Corollary}
\theoremstyle{definition}

\theoremstyle{remark}
\newtheorem*{remark}{Remark}
\numberwithin{equation}{section}

\newcommand{\Q}{\mathbb{Q}}

\newcommand{\Z}{\mathbb{Z}}

\newcommand{\N}{\mathbb{N}}
\renewcommand{\Re}{\operatorname{Re}}
\newcommand{\Mod}[1]{\ (\mathrm{mod}\ #1)}

\newcommand{\Addresses}{{
  \bigskip
  \footnotesize

  \noindent J.~Iskander, \textsc{2046 Deren Way NE, Atlanta, GA 30345}\par\nopagebreak
  \textit{Email address}: \texttt{jonasiskander@gmail.com}

  \medskip

  \noindent V.~Jain, \textsc{Department of Mathematics, Massachusetts Institute of Technology, Cambridge, MA 02139}\par\nopagebreak
  \textit{Email address}: \texttt{vanshika@mit.edu}

  \medskip

  \noindent V.~Talvola, \textsc{Department of Mathematics, Princeton University, Princeton, NJ 08544}\par\nopagebreak
  \textit{Email address}: \texttt{vtalvola@princeton.edu}

}}

\title{Exact Formulae for the Fractional Partition Functions}
\author{Jonas Iskander, Vanshika Jain, and Victoria Talvola}
\date{}

\begin{document}

\maketitle
\begin{abstract}
    The partition function $p(n)$ has been a testing ground for applications of analytic number theory to combinatorics. In particular, Hardy and Ramanujan invented the ``circle method'' to estimate the size of $p(n)$, which was later perfected by Rademacher who obtained an exact formula. Recently, Chan and Wang considered the fractional partition functions, defined for \(\alpha \in \Q\) by $\sum_{n = 0}^\infty p_{\alpha}(n)x^n := \prod_{k=1}^\infty (1-x^k)^{-\alpha}$. In this paper we use the Rademacher circle method to find an exact formula for \(p_\alpha(n)\) and study its implications, including log-concavity and the higher-order generalizations (i.e., the Turán inequalities) that \(p_\alpha(n)\) satisfies. 
\end{abstract}

\section{Introduction and Statement of Results}
A \textit{partition} of a nonnegative integer $n$ is a non-increasing sequence of positive integers with sum $n$. We use $p(n)$ to denote the number of partitions of $n$. One powerful tool for analyzing the partition function is Euler's generating function:
\begin{equation}
   P(x) := \sum_{n=0}^\infty p(n)x^n = \prod_{k=1}^\infty \frac{1}{1-x^k}.
\end{equation}
 
 \noindent
The study of the size of $p(n)$ spurred the development of the ``circle method,'' which has had many applications, including the proof of the weak Goldbach conjecture \cite{helfgott}. In 1918, G. H. Hardy and S. Ramanujan \cite{hardy} invented this method to obtain an infinite but divergent series expansion for \(p(n)\) and the asymptotic formula: $$p(n) \sim \frac{e^{\pi \sqrt{2n/3}}}{4n\sqrt{3}}.$$ This method was perfected by H. Rademacher \cite{rademacher}, who determined the convergent exact formula \begin{equation}\label{rseries}
    p(n) = \frac{2 \pi}{(24n-1)^\frac{3}{4}} \sum_{k=1}^{\infty} \frac{A_k(n)}{k} \cdot I_{\frac{3}{2}}\left(\frac{\pi}{6k} \sqrt{24n-1}\right),
\end{equation} where \begin{equation*}
    I_{\nu}(z) := \left(\frac{z}{2}\right)^\nu \sum_{k=0}^\infty \frac{(z/2)^{2k}}{k!\Gamma(\nu+k+1)}
\end{equation*} is the modified Bessel function of the first kind, \begin{equation*}
    A_k(n) := \!\!\!\!\sum_{\substack{0 \leq h < k \\ \gcd(h, k) = 1}}\!\!\!\! e^{\pi i s(h, k) - 2 \pi i n h / k}
\end{equation*} is a Kloosterman sum, and \begin{equation}\label{dedekindsum}
    s(h,k) := \sum_{r = 1}^{k - 1} \frac{r}{k} \left(\frac{hr}{k}-\left\lfloor\frac{hr}{k}\right\rfloor-\frac{1}{2}\right)
\end{equation} is the usual Dedekind sum.


The partition function also satisfies certain congruences, which exhibit a great degree of structure. Ramanujan was the first to study these congruences, and he discovered examples including \(p(5n+4) \equiv 0 \pmod{5}\). In a recent paper, Chan and Wang \cite{chan} defined for $\alpha \in \Q$ the \textit{fractional partition function} \(p_\alpha(n)\) in terms of its generating function \begin{equation}\label{palphadef}
    P(x)^\alpha = \prod_{k = 1}^\infty \frac{1}{(1-x^k)^\alpha} =: \sum_{n=0}^\infty p_\alpha(n)x^n,
\end{equation} and studied its congruences, showing, for instance, that \(p_{1/2}(29n + 26) \equiv 0 \pmod{29}\). A general theory of such congruences has recently been developed by Bevilacqua, Chandran, and Choi \cite{bevilacqua}. The discussion of congruences for \(p_\alpha(n)\) is possible because \(p_\alpha(n)\) is rational whenever \(\alpha\) is rational.

When $\alpha \in \Z^+$, $p_\alpha(n)$ counts the number of partitions of $n$ in which each term is labeled with one of $\alpha$ different colors, where the order of the colors does not matter \cite{keith}. Moreover, in such cases, the function \begin{align}
    \eta(\tau)^{-\alpha} & = q^{-\frac{\alpha}{24}} P(q)^{\alpha}
\end{align} is a weakly holomorphic modular form of weight $-\alpha/2 \in (1/2)\Z$, where $\tau$ is in the upper half-plane, $\eta(\tau) := q^{1/24} \prod_{n \geq 1} (1 - q^n)$ is the Dedekind eta function, and $q := e^{2 \pi i \tau}$. This makes it possible to compute the values of $p_\alpha(n)$ using Maass-Poincar\'e series, as described by Bringmann et al. \cite[\S6.3]{bringmann}, which give a Rademacher-type infinite series expansion that reduces to (\ref{rseries}) when $\alpha = 1$. To do this, one computes the principal part of $\eta(\tau)^{-\alpha}$, which correspond to the values \(p_\alpha(n)\) for \(0 \leq n \leq \lfloor\alpha/24\rfloor\). Then, using the fact that a weakly holomorphic modular form is determined by its weight and principal part, one can write it as a finite sum of Maass-Poincar\'e series and apply a known formula for the coefficients of such series. While these observations shed light on the case where \(\alpha\) is a positive integer, there is currently no known combinatorial or modular-form interpretation of \(p_\alpha(n)\) for arbitrary rational \(\alpha\).

In this paper, we extend the definition of $p_\alpha(n)$ to arbitrary real \(\alpha\) via (\ref{palphadef}) and give exact formulas for \(p_{\alpha}(n)\) in the spirit of Rademacher. For real $\alpha > 0$, $n > \alpha/24$, and \(m \leq \alpha/24\), we define the functions \begin{equation}\label{mu}
    \nu_\alpha(n) := \sqrt{n - \frac{\alpha}{24}}, \quad \mu_\alpha(m) := \sqrt{\frac{\alpha}{24} - m}
\end{equation} and the \textit{$\alpha$-Kloosterman sum} \begin{equation}
A_k^{(\alpha)}(n, m) := \sum_{\substack{0 \leq h < k \\ (h, k) = 1}} e^{\alpha \pi i s(h, k) + \frac{2 \pi i}{k}(mH - nh)},
\end{equation} where \(H\) denotes an inverse of \(h\) modulo \(k\) and $s(h, k)$ is the Dedekind sum defined in (\ref{dedekindsum}). Our exact formulas for $p_\alpha(n)$ are the content of the following theorem.

\begin{thm}\label{main theorem}
For all \(\alpha > 0\) and \(n > \alpha/24\), we have \begin{align} \label{main series}
     p_\alpha(n) &= \nu_\alpha(n)^{-\frac{\alpha}{2}-1}\sum_{m=0}^q \mu_\alpha(m)^{\frac{\alpha}{2}+1} p_\alpha(m) \sum_{k = 1}^\infty \frac{2\pi}{k} A_k^{(\alpha)}(n, m)I_{\frac{\alpha}{2}+1}\left(\frac{4\pi}{k}\nu_\alpha(n)\mu_\alpha(m)\right),
\end{align} where \(q := \lfloor \frac{\alpha}{24} \rfloor\).

\end{thm}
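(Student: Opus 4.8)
The plan is to adapt Rademacher's circle method to the function $\eta(\tau)^{-\alpha}$, treating the Dedekind eta multiplier system raised to the real power $\alpha$. Writing $x = q = e^{2\pi i\tau}$, the identity $\eta(\tau) = q^{1/24}\prod_{k\ge 1}(1-q^k)$ gives $P(x)^\alpha = q^{\alpha/24}\eta(\tau)^{-\alpha}$, so that $p_\alpha(n)$ is the coefficient extracted by Cauchy's formula
$$p_\alpha(n) = \frac{1}{2\pi i}\oint_C \frac{P(x)^\alpha}{x^{n+1}}\,dx,$$
with $C$ a circle of radius slightly less than $1$. First I would perform a Farey dissection of order $N$, splitting $C$ into arcs indexed by reduced fractions $h/k$ with $k\le N$, and parametrize each arc by $\tau = \frac{h}{k} + \frac{iz}{k}$ with $z$ ranging over a short vertical segment.

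On the arc attached to $h/k$, the next step is to apply the eta transformation formula for the modular map sending $h/k$ to $i\infty$, namely $\tau \mapsto \tau' = \frac{H}{k} + \frac{i}{z}$, where $H$ is an inverse of $h$ modulo $k$. Raising the standard transformation $\eta(\gamma\tau) = \varepsilon(\gamma)\{-i(c\tau+d)\}^{1/2}\eta(\tau)$ to the power $-\alpha$ produces the multiplier $e^{\alpha\pi i s(h,k)}$ together with a fractional power of $z$; since $\alpha$ need not be an integer, I would fix a single-valued branch of this power on the upper half-plane and track it consistently. I then expand $\eta(\tau')^{-\alpha}$ in its $q'$-series $\sum_m p_\alpha(m)\,q'^{\,m-\alpha/24}$: the principal part, consisting of the terms with $m-\alpha/24\le 0$, i.e.\ $0\le m\le \lfloor\alpha/24\rfloor = q$, supplies the main term and carries the coefficients $p_\alpha(m)$ along with factors $e^{2\pi i mH/k}$, while the terms with $m > \alpha/24$ are relegated to the error.

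Collecting the exponentials now assembles the Kloosterman sum: the Cauchy kernel contributes $e^{-2\pi i nh/k}$, the principal term contributes $e^{2\pi i mH/k}$, and the multiplier contributes $e^{\alpha\pi i s(h,k)}$, so summing over $h$ coprime to $k$ with $0\le h<k$ produces exactly $A_k^{(\alpha)}(n,m)$. For the remaining integral over $z$, I would replace the Farey arcs by the idealized Ford-circle contour and let $N\to\infty$; the resulting Hankel-type contour integral evaluates to a modified Bessel function. The index $\tfrac{\alpha}{2}+1$ reflects the weight $-\alpha/2$ together with the $+1$ from the measure $dx/x^{n+1}$, the argument $\frac{4\pi}{k}\nu_\alpha(n)\mu_\alpha(m)$ is the geometric mean of the depths $\nu_\alpha(n)^2$ and $\mu_\alpha(m)^2$ below the two cusps, and the prefactors $\nu_\alpha(n)^{-\alpha/2-1}$ and $\mu_\alpha(m)^{\alpha/2+1}$ arise from normalizing that integral, reproducing (\ref{main series}).

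The main obstacle will be the error analysis, and here the novelty of arbitrary real $\alpha$ bites. Because $\eta^{-\alpha}$ is not a modular form when $\alpha\notin 2\Z$, none of the usual structural shortcuts (finite-dimensionality, or being ``determined by weight and principal part'') are available, and the argument must rest entirely on the transformation formula and explicit estimates. I would need uniform bounds on $|\eta(\tau')^{-\alpha}|$ as $\tau'$ approaches the real axis through the non-principal part, showing both that the contribution of the terms with $m>\alpha/24$ is negligible and that the discrepancy between the Farey arcs and the Ford circles vanishes as $N\to\infty$. Controlling these bounds uniformly in $k$, while keeping the fractional branch of $(c\tau+d)^{-\alpha/2}$ consistent, is the crux on which convergence of the series in (\ref{main series}) rests.
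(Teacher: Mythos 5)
Your overall route is the same as the paper's: Cauchy's formula for $p_\alpha(n)$, a Farey/Ford-circle dissection following Rademacher, extraction of the principal-part terms $0 \le m \le \lfloor \alpha/24\rfloor$ as main terms, assembly of the phases $e^{\alpha\pi i s(h,k)+\frac{2\pi i}{k}(mH-nh)}$ into $A_k^{(\alpha)}(n,m)$, and evaluation of the limiting loop integral as $I_{\frac{\alpha}{2}+1}$ via its contour-integral representation. The error bookkeeping you describe (negligibility of the $m>\alpha/24$ tail, vanishing of the arc corrections as $N\to\infty$) is exactly what the paper carries out with its bounds on $I_2$, $J_1$, and $J_2$, and your observation that no modular-form structural shortcuts are available is accurate but harmless, since the paper (like Rademacher for $\alpha=1$) uses only explicit estimates anyway.

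The one step where your plan has a genuine gap is the transformation formula itself. You propose to obtain it by ``raising the standard transformation $\eta(\gamma\tau)=\varepsilon(\gamma)\{-i(c\tau+d)\}^{1/2}\eta(\tau)$ to the power $-\alpha$'' and then fixing a branch. But for non-integral $\alpha$ the map $w \mapsto w^{-\alpha}$ is not multiplicative: from $A = BC$ one only gets $A^{-\alpha} = B^{-\alpha}C^{-\alpha}e^{2\pi i m\alpha}$ for some integer $m$ determined by the arguments of $A$, $B$, $C$, and this integer can vary with $h$, $k$, and $z$. So the multiplier $e^{\alpha\pi i s(h,k)}$ and the branch $(z/k)^{\alpha/2}$ cannot simply be read off from the weight-$1/2$ identity; an uncontrolled phase $e^{2\pi i m\alpha}$ would enter each term of the sum over $h$ and corrupt the Kloosterman sum. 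What is actually needed is an exact \emph{additive} identity for the logarithm, with every branch pinned down, which one then multiplies by $\alpha$ and exponentiates; this is precisely why the paper spends Section 2 deriving the logarithmic functional equation (Theorem~\ref{logfun}) from Iseki's formula before deducing the multiplicative statement (Theorem~\ref{functional equation}) used in the circle method. Your ``track the branch consistently'' is therefore not an implementation detail but the substance of that argument---it is also exactly where the normalization $hH\equiv -1 \pmod{k}$ and the choice of branch cut matter. Once that additive functional equation is in hand, the rest of your outline goes through as in the paper.
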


\noindent
Theorem~\ref{main theorem} also enables the calculation of explicit error bounds for approximations of $p_\alpha(n)$ obtained by truncating \eqref{main series}. These have several implications, including a simple description of the asymptotic behavior of $p_\alpha(n)$ for large $n$, given in Corollary~\ref{asymp}.

\begin{cor}\label{asymp}
For all \(\alpha > 0\), as \(n \to \infty\), we have \begin{align*}
    p_\alpha(n) &\sim 2\pi\frac{I_{\frac{\alpha}{2}+1}\left(\frac{\pi\alpha}{6}\lambda_\alpha(n)\right)}{\lambda_\alpha(n)^{\frac{\alpha}{2}+1}} \sim \sqrt{\frac{12}{\alpha}} \cdot \frac{ e^{\frac{\alpha\pi}{6}\lambda_\alpha(n)}}{\lambda_\alpha(n)^{\frac{\alpha + 3}{2}}},
\end{align*} where \(\lambda_\alpha(n) := \sqrt{\frac{24n}{\alpha}-1}\).
\end{cor}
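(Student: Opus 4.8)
The plan is to extract from the exact formula \eqref{main series} the single term that dominates as $n \to \infty$ and to show that everything else is exponentially smaller. Since the Bessel argument $\frac{4\pi}{k}\nu_\alpha(n)\mu_\alpha(m)$ is increasing in $\mu_\alpha(m) = \sqrt{\alpha/24 - m}$ and decreasing in $k$, and $\mu_\alpha(m)$ is maximized at $m = 0$, the pair $(m,k) = (0,1)$ uniquely maximizes the argument. First I would evaluate this term explicitly. Using $p_\alpha(0) = 1$ and $A_1^{(\alpha)}(n,0) = 1$ (the $k=1$ Kloosterman sum has a single summand equal to $e^0$), together with the identity $\nu_\alpha(n) = \sqrt{\alpha/24}\,\lambda_\alpha(n)$, one checks that $\frac{4\pi}{1}\nu_\alpha(n)\mu_\alpha(0) = \frac{\pi\alpha}{6}\lambda_\alpha(n)$ and that the prefactor $\nu_\alpha(n)^{-\frac{\alpha}{2}-1}\mu_\alpha(0)^{\frac{\alpha}{2}+1}$ collapses to $\lambda_\alpha(n)^{-\frac{\alpha}{2}-1}$. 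Hence the $(0,1)$ term equals exactly $2\pi\,I_{\frac{\alpha}{2}+1}\!\big(\tfrac{\pi\alpha}{6}\lambda_\alpha(n)\big)\big/\lambda_\alpha(n)^{\frac{\alpha}{2}+1}$, which is the claimed leading expression.

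Next I would bound the remaining terms, which split into two families: the finitely many $m = 1,\dots,q$ at $k = 1$, and all terms with $k \ge 2$. For the former, each has Bessel argument $4\pi\nu_\alpha(n)\mu_\alpha(m)$ with $\mu_\alpha(m) < \mu_\alpha(0)$; using the large-argument asymptotic $I_\nu(z) \sim e^z/\sqrt{2\pi z}$ (or the crude bound $I_\nu(z) = O(e^z)$), the ratio of such a term to the leading one is $O\!\big(e^{4\pi\nu_\alpha(n)(\mu_\alpha(m)-\mu_\alpha(0))}\big)$, which decays exponentially in $\nu_\alpha(n)$, hence in $n$. For the latter, I would use the trivial bound $|A_k^{(\alpha)}(n,m)| \le \varphi(k) \le k$ (every summand has modulus one, since $s(h,k)$ is real and the full exponent is purely imaginary) together with a uniform upper bound on $I_{\frac{\alpha}{2}+1}$. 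The largest argument occurring for $k \ge 2$ is at $(m,k) = (0,2)$, namely $\tfrac12\cdot\tfrac{\pi\alpha}{6}\lambda_\alpha(n)$, so the entire $k \ge 2$ tail is $O\!\big(e^{\frac{1}{2}\cdot\frac{\pi\alpha}{6}\lambda_\alpha(n)}\big)$ up to polynomial factors, again exponentially smaller than the leading term (whose exponential is $e^{\frac{\pi\alpha}{6}\lambda_\alpha(n)}$). Summability over $k$ is clear because $I_{\frac{\alpha}{2}+1}\!\big(\tfrac{\pi\alpha}{6}\lambda_\alpha(n)/k\big)$ decays like $\big(\tfrac{\pi\alpha}{6}\lambda_\alpha(n)/(2k)\big)^{\frac{\alpha}{2}+1}$ for large $k$.

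Combining these estimates shows that $p_\alpha(n)$ equals the $(0,1)$ term times $\big(1 + o(1)\big)$, establishing the first asymptotic equivalence. For the second, I would substitute the expansion $I_{\frac{\alpha}{2}+1}(z) \sim e^z/\sqrt{2\pi z}$ with $z = \frac{\pi\alpha}{6}\lambda_\alpha(n) \to \infty$ into the leading expression and simplify the constants: the factor $\sqrt{2\pi\cdot\frac{\pi\alpha}{6}}$ becomes $\pi\sqrt{\alpha/3}$, so $2\pi/(\pi\sqrt{\alpha/3}) = \sqrt{12/\alpha}$ and the power of $\lambda_\alpha(n)$ becomes $-\frac{\alpha+3}{2}$, yielding the stated form. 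The only genuine work lies in the tail estimate of the second paragraph; once a uniform Bessel bound and the elementary Kloosterman bound are in hand, everything reduces to comparing exponential growth rates, and the convergence together with the explicit error bounds already available from Theorem~\ref{main theorem} makes this step routine.
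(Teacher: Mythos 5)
Your proposal is correct and takes essentially the same approach as the paper: both isolate the single $(m,k)=(0,1)$ term of \eqref{main series}, check that it equals $2\pi\, I_{\frac{\alpha}{2}+1}\big(\tfrac{\pi\alpha}{6}\lambda_\alpha(n)\big)/\lambda_\alpha(n)^{\frac{\alpha}{2}+1}$, show every other term is exponentially smaller via the trivial bound $|A_k^{(\alpha)}(n,m)|\le k$ and the Paris-type Bessel inequality, and finish with $I_\nu(z)\sim e^z/\sqrt{2\pi z}$. The only difference is organizational: the paper gets the tail estimate by citing Theorem~\ref{seriesboundlem} with a $\delta$ chosen so that $p_\alpha(n;\delta)$ is exactly the leading term, whereas you re-derive that estimate inline (note the error bound you invoke at the end lives in Theorem~\ref{seriesboundlem}, not Theorem~\ref{main theorem}).
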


\noindent
We remark that because \(p_\alpha(n)\) is rational for any \(\alpha \in \Q\), Theorem \ref{main theorem} implies that the series in \eqref{main series} converges to a rational number when \(n \in \Z\), \(n > \alpha/24\). We make use of this fact later in the paper (Corollary \ref{rational}) to provide a finite formula for \(p_\alpha(n)\) in the case where \(\alpha \in \Q\).

When considering a sequence of real numbers, one is often interested in more than just its asymptotic behavior. One property that is often studied is log-concavity. A sequence \(\{a(n)\}\) is called \textit{log-concave} if we have \[a(n+1)^2 - a(n)a(n+2) \geq 0\] for all \(n\). Nicolas \cite{nicolas} and DeSalvo and Pak \cite{DeSalvo} independently proved that \(p(n)\) is log-concave for \(n \geq 25\). In fact, the condition of log-concavity is a special case of what are known as the \textit{higher Turán inequalities} \cite{chen}. One can show that a sequence satisfies the higher Turán inequalities of degree \(d\) if and only if the Jensen polynomials \begin{align}
    J_{a}^{d, n}(x) := \sum_{j = 0}^d \binom{d}{j} a(n + j) x^j
\end{align} have strictly real roots for all \(n\)---we say that such a polynomial is \textit{hyperbolic} \cite{craven1989jensen}. Chen, Jia, and Wang \cite{chen} conjectured that for any fixed degree \(d\), \(J_p^{d,n}(x)\) is eventually hyperbolic, and proved this for $d = 3$; Larson and Wagner \cite{larson} independently proved this conjecture for $d \in \{3, 4, 5\}$. Griffin, Ono, Rolen, and Zagier \cite{griffin} established the conjecture of Chen et al.\@ for all $d$ by showing that, after suitable renormalization, the Jensen polynomials of \(p(n)\) converge to the Hermite polynomials \(H_d(x)\) as \(n \to \infty\). We apply their methods to prove the analogue of Chen et al.'s conjecture for $p_\alpha(n)$.

\begin{thm}\label{jensen}
For $\alpha > 0$ and $d \in \N$, there exists $N_d(\alpha)$ such that $J_{p_\alpha}^{d, n}(X)$ is hyperbolic for all $n > N_d(\alpha)$.
\end{thm}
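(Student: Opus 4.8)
The plan is to follow the strategy of Griffin, Ono, Rolen, and Zagier \cite{griffin}: show that, after an appropriate renormalization, the Jensen polynomials $J^{d,n}_{p_\alpha}(X)$ converge coefficientwise to the Hermite polynomials $H_d(X)$, which are hyperbolic with $d$ distinct real roots. Since the roots of a polynomial of fixed degree vary continuously with its coefficients, and since simple real roots are stable under small perturbations, coefficientwise convergence to $H_d$ forces the renormalized Jensen polynomials---and hence $J^{d,n}_{p_\alpha}(X)$ itself, which differs only by an affine change of variable---to be hyperbolic once $n$ exceeds some threshold $N_d(\alpha)$.

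The input their method requires is a sufficiently precise asymptotic expansion of $\log p_\alpha(n)$ in descending powers of $n^{1/2}$. First I would extract this expansion from the exact formula of Theorem~\ref{main theorem}. The term with $k = 1$ and $m = 0$ dominates: because $\mu_\alpha(m) = \sqrt{\alpha/24 - m}$ is maximized at $m = 0$ and the modified Bessel function satisfies $I_\nu(z) \sim e^z/\sqrt{2\pi z}$, the argument $\frac{4\pi}{k}\nu_\alpha(n)\mu_\alpha(m)$ is largest precisely when $k = 1$ and $m = 0$, so all other terms are exponentially smaller and, using the error bounds that accompany Theorem~\ref{main theorem}, contribute nothing to the asymptotic expansion. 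Writing $\lambda_\alpha(n) = \sqrt{24n/\alpha - 1}$ as in Corollary~\ref{asymp} and taking logarithms of the leading term (with $p_\alpha(0) = 1$) yields
\begin{equation*}
    \log p_\alpha(n) = \pi\sqrt{\tfrac{2\alpha}{3}}\,\sqrt{n} - \tfrac{\alpha+3}{2}\log\sqrt{n} + \cdots,
\end{equation*}
and by expanding $I_{\alpha/2+1}$ to higher order one obtains as many terms of a full expansion in $n^{-1/2}$ as are needed for a given $d$.

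With this expansion in hand, I would verify the hypotheses of the general theorem of \cite{griffin}. Expanding $\log\!\big(p_\alpha(n+j)/p_\alpha(n)\big)$ in $j$ gives coefficients $g_1(n), g_2(n), \dots$ with $g_1(n) \sim \tfrac{\pi}{2}\sqrt{2\alpha/3}\,n^{-1/2}$ and, crucially, $g_2(n) \sim -\tfrac{\pi}{8}\sqrt{2\alpha/3}\,n^{-3/2} < 0$, so that the renormalization parameter $\delta(n) := \sqrt{-2\,g_2(n)} \to 0$ is real and positive; this is exactly the condition that produces the Gaussian weight whose moments generate the Hermite polynomials. Matching the remaining coefficients to the growth rates demanded by \cite{griffin} is then a routine, if lengthy, computation, after which the convergence of the renormalized Jensen polynomials to $H_d$ follows.

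The main obstacle I anticipate is the bookkeeping in the second step: proving that the non-dominant terms of \eqref{main series} (all $k \geq 2$, and all $1 \leq m \leq q$ when $\alpha \geq 24$) are uniformly negligible to the order in $n^{-1/2}$ required, and that the logarithm of the dominant term can be expanded to arbitrary order with an explicit, controllable error. This amounts to combining the asymptotics of $I_{\alpha/2+1}$ with the truncation error estimates for \eqref{main series}; once these expansions are established to the order dictated by $d$, the remainder of the argument is the stable-perturbation principle applied to the Hermite limit.
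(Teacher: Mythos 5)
Your proposal follows essentially the same route as the paper's proof: both extract the asymptotic expansion of $\log p_\alpha(n)$ from the exact formula (the paper via Corollary~\ref{asymp}, you directly from Theorem~\ref{main theorem} with its error bounds) and then invoke the general theorem of Griffin--Ono--Rolen--Zagier \cite{griffin}, under which the renormalized Jensen polynomials converge to the Hermite polynomials $H_d$ and hyperbolicity follows for large $n$. Your handling of the higher-order error terms is if anything more explicit than the paper's; the only discrepancy is your normalization $\delta(n) = \sqrt{-2g_2(n)}$, where \cite{griffin} (and the paper, with $\delta(n) = (\pi/2)^{1/2}m^{1/4}n^{-3/4} + O(n^{-5/4})$) uses $\delta(n)^2 = -g_2(n)$, a harmless convention difference that only rescales $X$ and does not affect hyperbolicity.
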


Our paper is divided into five main sections. In Section 2, we establish some preliminary results, including a modified version of the Dedekind functional equation for $\eta(\tau)$. In Section 3, we use the circle method along with this identity to prove Theorem~\ref{main theorem}. In Section 4, we use Theorem~\ref{main theorem} to prove more results about $p_\alpha(n)$, including the estimate given in Corollary~\ref{asymp}. We also analyze the hyperbolicity of the Jensen polynomials associated with \(p_\alpha(n)\). Finally, in Section 5 we provide numerical illustrations of our main theorems.

\subsection*{Acknowledgements}
The authors would like to thank Ken Ono, Larry Rolen, and Ian Wagner for suggesting the problem and their guidance. The research was supported by the generosity of the Asa Griggs Candler Fund, the National Security Agency under grant H98230-19-1-0013, and the National Science Foundation under grants 1557960 and 1849959.   

\section{Proof of the Functional Equation for $P(x)^{\alpha}$}

In order to apply the circle method to \(p_\alpha(n)\), we first require a precise statement of Dedekind's functional equation for the eta function. We derive this from Iseki's formula \cite[\S3.5]{apostol}. For convenience, when $\Re(x)>0$, we set \begin{equation*}
    \lambda(x) := \sum_{m=1}^\infty \frac{e^{-2\pi mx}}{m} = -\log(1-e^{-2\pi x}).
\end{equation*}

\begin{remark}
Throughout this section, we let \(\log{z}\) denote the branch of the logarithm with a branch cut along the negative imaginary axis and \(\log{1} = 0\), and we define \(\arg{z} := \Im(\log{z})\). 
\end{remark}

\subsection{Derivation of the Logarithmic Functional Equation from Iseki's Formula}

In order to derive the required modification of the functional equation for \(\eta(\tau)\), we first prove a lemma which follows from Iseki's formula \cite[\S3.5]{apostol}.

\begin{thm}[Iseki's Formula]
For \(\Re z > 0\), \(0 < \alpha < 1\), and \(0 \leq \beta \leq 1\), let \begin{equation*}
    \Lambda(\alpha, \beta, z) := \sum_{r=0}^\infty\left[\lambda((r+\alpha)z-i\beta)+\lambda((r+1-\alpha)z+i\beta)\right].
\end{equation*} Then we have \begin{equation}
    \Lambda(\alpha, \beta, z) = \Lambda(1-\beta, \alpha, z^{-1}) - \pi z\left(\alpha^2 - \alpha + \frac{1}{6}\right) + \frac{\pi}{z}\left(\beta^2 - \beta + \frac{1}{6}\right) + 2\pi i\left(\alpha - \frac{1}{2}\right)\left(\beta - \frac{1}{2}\right).
\end{equation}
\end{thm}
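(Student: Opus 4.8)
Iseki's formula is classical---it is the content of \cite[\S3.5]{apostol}, so one option is simply to cite it---but the following is the route I would take for a self-contained proof. The plan is to pass to a Mellin--Barnes representation of $\Lambda(\alpha,\beta,z)$. Writing $\lambda(w)=\sum_{m\ge1}m^{-1}e^{-2\pi m w}$ and inserting the Mellin transform $e^{-w}=\frac{1}{2\pi i}\int_{(c)}\Gamma(s)w^{-s}\,ds$ (valid for $\Re w>0$ and $c>0$) into each summand, the sum over $m$ contributes a factor $(2\pi)^{-s}\zeta(s+1)$ and the sum over $r$ contributes Hurwitz zeta functions $\zeta(s,a):=\sum_{r\ge0}(r+a)^{-s}$. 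For $c>1$ this gives
\begin{equation*}
    \Lambda(\alpha,\beta,z)=\frac{1}{2\pi i}\int_{(c)}(2\pi)^{-s}\Gamma(s)\zeta(s+1)\,z^{-s}\big[\zeta(s,\alpha-i\beta/z)+\zeta(s,1-\alpha+i\beta/z)\big]\,ds.
\end{equation*}

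First I would establish the reciprocity $z\mapsto 1/z$. The engine is Hurwitz's functional equation (see \cite[Ch.~12]{apostol}), which expresses $\zeta(s,a)$ in terms of the periodic (Lerch) zeta functions $\sum_{n\ge1}e^{\pm 2\pi i n a}n^{s-1}$. Applying it to the bracket and re-expanding the resulting exponential series interchanges the two ``directions'': the Hurwitz argument $\alpha-i\beta/z$ is carried to $(1-\beta)-i\alpha z$, which is exactly the argument occurring in the analogous representation of $\Lambda(1-\beta,\alpha,1/z)$. Shifting the line of integration to the left and re-summing therefore identifies the leading part of the integral with $\Lambda(1-\beta,\alpha,1/z)$.

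The three polynomial correction terms arise from the residues crossed during the contour shift, at $s=1$, $s=0$, and $s=-1$, where $\zeta(s+1)$, $\Gamma(s)$, and the Hurwitz zetas have their poles. Evaluating these residues (and combining them with the leftover boundary pieces produced when the reflected series is reconstituted) uses the special values $\zeta(0,a)=\tfrac12-a$ and $\zeta(-1,a)=-\tfrac12 B_2(a)$ together with the reflection $B_k(1-a)=(-1)^kB_k(a)$ of the Bernoulli polynomials; these are precisely what convert the residues into $B_2(\alpha)=\alpha^2-\alpha+\tfrac16$, $B_2(\beta)$, and the product $B_1(\alpha)B_1(\beta)=(\alpha-\tfrac12)(\beta-\tfrac12)$, matching the coefficients $-\pi z$, $\pi/z$, and $2\pi i$ in the statement.

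The main obstacle is the analytic bookkeeping rather than any single idea. Three points need care: (i) since $\Re(\alpha-i\beta/z)$ need not lie in $(0,1]$, the Hurwitz zeta functions must be understood through analytic continuation and a fixed branch of $w^{-s}$ (the branch of $\log$ fixed in the Remark above), not through their defining series; (ii) interchanging the double sum with the Mellin integral and then shifting the contour must be justified by the standard Stirling and convexity growth estimates for $\Gamma$ and $\zeta$ in vertical strips, which guarantee that the shifted integrals converge and that the horizontal segments vanish; and (iii) one must track how the $z$-dependence hidden inside the Hurwitz arguments redistributes between the residues and the reflected integral, so that it collapses to the stated corrections---which are polynomial in $\alpha$ and $\beta$ and depend on $z$ only through the explicit factors $z$ and $1/z$.
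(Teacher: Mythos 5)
The paper never proves this statement: Iseki's formula is imported as known background, with the proof delegated to \cite[\S3.5]{apostol}, and the paper's own work begins only with the deduction of the $\beta=0$ lemma from it. Your sketch is, in outline, exactly the proof in that cited source --- Mellin--Barnes representation with $(2\pi)^{-s}\Gamma(s)\zeta(s+1)$ against Hurwitz zeta functions, Hurwitz's functional equation to reconstitute $\Lambda(1-\beta,\alpha,z^{-1})$, and residues from the contour shift --- so there is no difference of approach to report. Two of the items you file under ``analytic bookkeeping,'' however, are the actual crux rather than routine verification. First, Hurwitz's functional equation is a theorem about $\zeta(s,a)$ with \emph{real} $a\in(0,1]$, while your parameters $\alpha-i\beta/z$ and $1-\alpha+i\beta/z$ are genuinely complex; you need either the Lerch transformation formula for complex parameters, or a reduction in which the identity is proved for restricted $z$ and then extended using the fact that both sides are holomorphic in $z$ on $\Re z>0$. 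Without one of these, the central step is unavailable as stated. Second, the residues are not all given by the special values you quote: at $s=0$ the factor $\Gamma(s)\zeta(s+1)$ has a \emph{double} pole with expansion $s^{-2}+O(1)$, and since the two Hurwitz parameters sum to $1$ (killing the $\log z$ term), the residue there is $\zeta'(0,a_1)+\zeta'(0,a_2)=-\log\left(2\sin(\pi(\alpha-i\beta/z))\right)$, a non-elementary quantity that must be split as $\lambda(i\alpha+\beta/z)$ --- precisely the $r=0$ term completing the reflected series --- plus linear terms that feed into the constants. Only the $s=\pm1$ residues are Bernoulli evaluations; reassuringly, the $s=-1$ residue $-\pi z\,B_2(\alpha-i\beta/z)$, once expanded, supplies $-\pi z\left(\alpha^2-\alpha+\tfrac16\right)$ \emph{and} pieces of the $\pi/z$ and constant terms, confirming your point (iii) that the $z$-dependence hidden in the Hurwitz arguments redistributes correctly. (A small slip: as listed, your pole attributions are permuted --- $\zeta(s+1)$ has its pole at $s=0$, the Hurwitz zetas at $s=1$, and $\Gamma$ at $s=0,-1$ --- though the set $\{1,0,-1\}$ is right.)
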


\begin{lem}\label{logfunlem}
For \(\Re z > 0\), we have \begin{equation}\label{isekilem}
     \sum_{r=1}^\infty \lambda(rz) = \sum_{r=1}^\infty \lambda \left(\frac{r}{z} \right) + \frac{1}{2}\log{z} - \left(\frac{\pi z}{12} - \frac{\pi}{12z}\right).
\end{equation}
\end{lem}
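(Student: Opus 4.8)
The plan is to derive the logarithmic functional equation \eqref{isekilem} as the special case $\alpha \to 0^+$, $\beta \to 0^+$ of Iseki's formula, exploiting the close relationship between the series $\Lambda(\alpha,\beta,z)$ and the sums $\sum_{r\geq 1}\lambda(rz)$. First I would observe that, formally, setting $\alpha = 0$ and $\beta = 0$ in the definition of $\Lambda$ gives $\sum_{r=0}^\infty[\lambda(rz) + \lambda((r+1)z)]$, whose terms almost reassemble into $2\sum_{r\geq 1}\lambda(rz)$, while the reflected term $\Lambda(1-\beta,\alpha,z^{-1})$ with $\beta=0,\alpha=0$ similarly produces $2\sum_{r\geq 1}\lambda(r/z)$; the $r=0$ contributions and the doubling are exactly what one must track carefully. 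The polynomial-in-$z$ remainder terms on the right of Iseki's formula specialize to $-\pi z/6 + \pi/(6z) + 2\pi i(-1/2)(-1/2) = -\pi z/6 + \pi/(6z) + \pi i/2$, so after dividing by $2$ the expected $-\pi z/12 + \pi/(12z)$ appears, and the $\frac12\log z$ term should emerge from the $r=0$ discrepancy.

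The main technical issue is that $\lambda(x) = -\log(1-e^{-2\pi x})$ has a logarithmic singularity as $x \to 0$, so neither $\Lambda(\alpha,\beta,z)$ nor its reflection converges when $\alpha=\beta=0$ outright: the $r=0$ term $\lambda(\alpha z - i\beta)$ blows up. Hence the honest approach is to take the limit $\alpha,\beta \to 0^+$ and isolate the divergent pieces. The hard part will be computing the limit of the single divergent term on each side and showing their difference is finite and equals $\tfrac12\log z$. Concretely, as $\alpha,\beta \to 0^+$ the problematic terms are $\lambda(\alpha z - i\beta)$ on the left and $\lambda((1-\beta)z^{-1} - i\alpha) \to \lambda(z^{-1})$ (finite) together with the singular reflected term $\lambda((1-\alpha)z^{-1}+i\beta)$—I would expand each using $\lambda(x) = -\log(1-e^{-2\pi x}) = -\log(2\pi x) + O(x)$ near $x=0$, so that $\lambda(\alpha z - i\beta) \approx -\log\bigl(2\pi(\alpha z - i\beta)\bigr)$ and the corresponding reflected divergence $\approx -\log\bigl(2\pi(\beta z^{-1} - i\alpha)\bigr)$ (up to relabeling). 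Subtracting, the $-\log(2\pi)$ and the $\log$ of the vanishing magnitudes should cancel, leaving a clean $\log z$ contribution from the ratio $z$; careful bookkeeping of the branch of the logarithm (using the branch cut convention fixed in the Remark) is essential to land the correct factor of $\tfrac12$ rather than a spurious factor of $2$ or an extra $\pi i$.

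Concretely, the steps I would carry out are: (1) write Iseki's identity for general small $\alpha,\beta>0$ and collect the terms of $\Lambda(\alpha,\beta,z)$ into $\sum_{r\geq 1}\lambda(rz)$-type sums plus finitely many boundary terms; (2) do the same for the reflected side $\Lambda(1-\beta,\alpha,z^{-1})$; (3) take $\alpha,\beta\to 0^+$ and use dominated convergence on the regular tails so that $\sum_{r\geq 1}\lambda((r+\alpha)z - i\beta) \to \sum_{r\geq 1}\lambda(rz)$ and likewise for the reflection; (4) carefully extract the $r=0$ singular terms, expand via $\lambda(x) \sim -\log(2\pi x)$, and combine with the polynomial remainder $-\pi z/6 + \pi/(6z) + \pi i/2$; (5) verify that the imaginary constant $\pi i/2$ and the $\log(2\pi)$ terms cancel against the singular-term expansion, leaving precisely $\tfrac12\log z$. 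A useful sanity check throughout is the $z=1$ case, where \eqref{isekilem} predicts $\sum_{r\geq 1}\lambda(r) = \sum_{r\geq 1}\lambda(r) + 0 - 0$, forcing the $\log z$, the polynomial part, and the residual constant all to vanish simultaneously at $z=1$; this pins down the normalization and confirms there is no stray additive constant. An alternative, perhaps cleaner, route is to bypass Iseki entirely and prove \eqref{isekilem} directly from the classical Dedekind transformation $\eta(-1/\tau) = \sqrt{-i\tau}\,\eta(\tau)$ by setting $\tau = iz$ and taking logarithms, since $\log\eta(iz) = -\pi z/12 - \sum_{r\geq 1}\lambda(rz)$; this immediately yields \eqref{isekilem} with the $\tfrac12\log z$ coming from $\log\sqrt{z}$, and I would keep it in reserve as a verification of the Iseki-based derivation.
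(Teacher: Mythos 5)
Your proposal is correct and is essentially the paper's own proof: the paper likewise specializes Iseki's formula, isolates the divergent $r=0$ terms, pairs them so that their difference tends to $\log(i/z) = \tfrac{\pi i}{2} - \log z$ under the fixed branch, and uses dominated convergence to pass to the limit in the remaining sums, after which the constants $-\tfrac{\pi z}{6} + \tfrac{\pi}{6z} + \tfrac{\pi i}{2}$ yield the identity upon dividing by $2$. The only differences are cosmetic---the paper sets $\beta = 0$ exactly (permitted since Iseki's formula allows $0 \leq \beta \leq 1$), so only the single limit $\alpha \to 0^+$ is needed and the divergent pair is $a(\alpha) = \lambda(\alpha z) - \lambda(i\alpha)$, evaluated by L'H\^{o}pital rather than by the expansion $\lambda(x) \approx -\log(2\pi x)$; one caution for your two-parameter version: writing out $\Lambda(1-\beta,\alpha,z^{-1})$ shows the singular reflected term is $\lambda(\beta z^{-1} + i\alpha)$ (with $+i\alpha$), not the $\lambda((1-\alpha)z^{-1}+i\beta)$ or the $-i\alpha$ sign you wrote, and this sign is essential since only then does the ratio collapse identically to $i/z$, making the joint limit path-independent.
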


\begin{proof}
Letting \(\beta = 0\) in Iseki's formula, we obtain \begin{equation}\label{iseki0}
    \Lambda(\alpha, 0, z) = \Lambda(1, \alpha, z^{-1}) - \pi z\left(\alpha^2 - \alpha + \frac{1}{6}\right) + \frac{\pi}{6z} - \pi i\left(\alpha - \frac{1}{2}\right).
\end{equation} From here, bringing \(\Lambda(1, \alpha, z^{-1})\) to the left side, reordering the summations, and setting \(a(\alpha) := \lambda(\alpha z) - \lambda(i\alpha)\) and \begin{equation*}
    b_r(\alpha) := \lambda((r+\alpha)z)+\lambda((r-\alpha)z) - \lambda\left(\frac{r}{z}-i\alpha\right) - \lambda\left(\frac{r}{z}+i\alpha\right)
\end{equation*} yields \begin{equation}\label{isekiprelim}
    a(\alpha) + \sum_{r=1}^\infty b_r(\alpha) = - \pi z\left(\alpha^2 - \alpha + \frac{1}{6}\right) + \frac{\pi}{6z} - \pi i\left(\alpha - \frac{1}{2}\right).
\end{equation} The reordering is valid because the sum over each of the four terms in \(b_r(\alpha)\) converges absolutely, since \(\lambda(\gamma z) \sim e^{-2\pi\gamma z}\) as \(\gamma \to \infty\). We proceed by taking the limit as \(\alpha \to 0^+\). We start by observing that \begin{align}
    \lim_{\alpha \to 0^+} a(\alpha) &= \lim_{\alpha \to 0^+}[\lambda(\alpha z) - \lambda(i\alpha)] \nonumber \\
    &= \lim_{\alpha \to 0^+}\left[\log(1-e^{-2\pi i\alpha}) - \log(1-e^{-2\alpha\pi z})\right] \nonumber \\
    &= \lim_{\alpha \to 0^+}\log\left(\frac{1-e^{-2\pi i\alpha}}{1-e^{-2\alpha\pi z}}\right),
\end{align} where the last step is justified because \(\arg(1-e^{-2\pi i\alpha}) - \arg(1-e^{-2\alpha\pi z}) \in (-\pi, \pi)\) for \(\alpha > 0\). By L'Hôpital's rule, \begin{align*}
    \lim_{\alpha \to 0^+}\frac{1-e^{-2\pi i\alpha}}{1-e^{-2\alpha\pi z}} = \lim_{\alpha \to 0^+}\frac{2\pi ie^{-2\pi i\alpha}}{2\pi ze^{-2\alpha\pi z}} = \frac{i}{z},
\end{align*} and so \begin{align*}
    \lim_{\alpha \to 0^+} a(\alpha) = \log\left(\frac{i}{z}\right) = \frac{\pi i}{2} - \log{z},
\end{align*} using the fact that \(\arg(i/z) \in (0, \pi)\) and \(\log(1/z) = -\log{z}\) for our definition of the logarithm. We now show that \begin{equation*}
    \lim_{\alpha \to 0^+} \sum_{r=1}^\infty b_r(\alpha) = \sum_{r=1}^\infty \lim_{\alpha \to 0^+} b_r(\alpha) = \sum_{r=1}^\infty(2\lambda(rz) - 2\lambda(r/z)).
\end{equation*} For this purpose, start by noting that for \(\Re x > 0\), we have \(|\lambda(x)| \leq \lambda(\Re x)\) by the series expansion for \(\lambda\), and that \(\lambda(\Re x)\) is monotonically decreasing. In particular, we have \begin{equation*}
    |\lambda((r\pm\alpha)z)| \leq \lambda\left(\left(r-\frac{1}{2}\right)\Re z\right) \leq \lambda\left(r \cdot \frac{\Re z}{2}\right),
\end{equation*} and \(|\lambda(rz\pm i\alpha)| \leq \lambda(r\Re z).\) For \(x > 0\), we can verify that \(\sum_{r=1}^\infty \lambda(rx)\) converges by the asymptotic behavior of \(\lambda(rx)\) as \(r \to \infty\). Consequently, by the discrete version of the dominated convergence theorem, we may exchange the order of the limit and the summation over \(b_r\). Thus, in the limit, (\ref{isekiprelim}) becomes \begin{equation}
    \frac{\pi i}{2} - \log{z} + 2\sum_{r=1}^\infty \lambda(rz) - 2\sum_{r=1}^\infty \lambda\left(\frac{r}{z}\right) = -\frac{\pi z}{6} + \frac{\pi}{6z} + \frac{\pi i}{2}.
\end{equation} This is equivalent to (\ref{isekilem}).
\end{proof}

For the main theorem of this section, we begin by citing a fact proven in \cite[\S3.6]{apostol}.

\begin{prop}
Let \(\Re{z} > 0\), let \(h, k \in \Z\) be coprime with \(k > 0\), and choose $H$ such that \(hH \equiv -1 \pmod{k}\). Then we have that \begin{equation}\label{apostolprop}
    \sum_{\substack{n = 1 \\ n \not\equiv 0 \Mod{k}}}^\infty \hspace{-0.4cm} \lambda\left(\frac{n}{k}(z - ih)\right) = \hspace{-0.4cm} \sum_{\substack{n = 1 \\ n \not\equiv 0 \Mod{k}}}^\infty \hspace{-0.4cm} \lambda\left(\frac{n}{k}(z^{-1} - iH)\right) + \left(\frac{\pi z}{12} - \frac{\pi}{12z}\right)\left(1 - \frac{1}{k}\right) + \pi is(h, k).
\end{equation}
\end{prop}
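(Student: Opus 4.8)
The plan is to derive \eqref{apostolprop} directly from Iseki's formula by specializing $\alpha = \mu/k$ and summing over the nonzero residues $\mu = 1, \dots, k-1$. The first fact I would record is that $\lambda$ is invariant under $x \mapsto x + i$, since $\lambda(x) = -\log(1 - e^{-2\pi x})$ and $e^{-2\pi(x+i)} = e^{-2\pi x}$; the branch cut causes no trouble because $\Re(1 - e^{-2\pi x}) > 0$ whenever $\Re x > 0$, which holds for every argument below. Writing $n = qk + \mu$ with $q \geq 0$ and $1 \leq \mu \leq k-1$, the condition $k \nmid n$ is exactly the omission of $\mu = 0$, and $\tfrac nk(z - ih) = (q + \tfrac\mu k)z - i(q + \tfrac\mu k)h$. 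Discarding the integer multiples of $i$ (namely $-i(qh + \lfloor h\mu/k\rfloor)$) by the periodicity just noted leaves
\[
\sum_{\substack{n \geq 1 \\ k \nmid n}} \lambda\!\left(\tfrac nk(z - ih)\right) = \sum_{\mu = 1}^{k-1} \sum_{q = 0}^\infty \lambda\!\left((q + \tfrac\mu k)z - i\beta_\mu\right), \qquad \beta_\mu := \left\{\tfrac{h\mu}{k}\right\},
\]
and since $\gcd(h,k) = 1$ we have $\beta_\mu \in (0,1)$, so the hypotheses $0 < \mu/k < 1$ and $0 \leq \beta_\mu \leq 1$ of Iseki's formula are satisfied. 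All rearrangements of summations are justified by absolute convergence, exactly as in the proof of Lemma~\ref{logfunlem}.

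The key structural step is a \emph{doubling} identity. Comparing the inner sum above with the first family of terms in $\Lambda(\mu/k, \beta_\mu, z)$, I would show that the second family $\lambda((r + 1 - \mu/k)z + i\beta_\mu)$, after the reflection $\mu \mapsto k-\mu$, reproduces the first family: one checks $1 - \mu/k = (k-\mu)/k$ and $\beta_{k-\mu} = 1 - \beta_\mu$, so by $i$-periodicity $\lambda\!\left((r + \tfrac{k-\mu}{k})z + i\beta_\mu\right) = \lambda\!\left((r + \tfrac{k-\mu}{k})z - i\beta_{k-\mu}\right)$. Hence $\sum_{\mu=1}^{k-1}\Lambda(\mu/k, \beta_\mu, z)$ equals twice the left-hand side of \eqref{apostolprop}, and the analogous statement (with $h,z$ replaced by $H, z^{-1}$) shows $\sum_{\nu=1}^{k-1}\Lambda(\nu/k, \{H\nu/k\}, z^{-1})$ equals twice the first sum on the right-hand side. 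Applying Iseki's transformation termwise to $\Lambda(\mu/k,\beta_\mu,z)$ and summing over $\mu$ then leaves four contributions.

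The transformed main term $\sum_\mu \Lambda(1 - \beta_\mu, \mu/k, z^{-1})$ is where the inverse $H$ enters, and I expect this to be the main obstacle. Here I would use the bijection $\mu \mapsto \nu := (-h\mu) \bmod k$ of $\{1, \dots, k-1\}$: then $\nu/k = 1 - \beta_\mu$, and because $hH \equiv -1 \pmod{k}$ we get $H\nu \equiv -Hh\mu \equiv \mu \pmod{k}$, whence $\{H\nu/k\} = \mu/k$ and $\Lambda(1 - \beta_\mu, \mu/k, z^{-1}) = \Lambda(\nu/k, \{H\nu/k\}, z^{-1})$. Summing over $\nu$ and invoking the doubling identity identifies this contribution with twice the $z^{-1}$-sum of \eqref{apostolprop}. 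This is the one place where the precise congruence $hH \equiv -1$, rather than $hH \equiv +1$, is forced, and verifying that this bijection matches both arguments of $\Lambda$ is the delicate point.

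It remains to evaluate the three elementary sums. A direct computation with $\sum \mu$ and $\sum \mu^2$ gives $\sum_{\mu=1}^{k-1}\!\left(\tfrac{\mu^2}{k^2} - \tfrac\mu k + \tfrac16\right) = -\tfrac16\!\left(1 - \tfrac1k\right)$, and the same value arises for $\sum_\mu(\beta_\mu^2 - \beta_\mu + \tfrac16)$ since $\beta_\mu$ runs over $\{j/k : 1 \leq j \leq k-1\}$; these produce the term $\left(\tfrac{\pi z}{12} - \tfrac{\pi}{12z}\right)\!\left(1 - \tfrac1k\right)$ after dividing by $2$. Finally, $\sum_\mu 2\pi i\!\left(\tfrac\mu k - \tfrac12\right)\!\left(\beta_\mu - \tfrac12\right) = 2\pi i\, s(h,k)$, because the Dedekind sum \eqref{dedekindsum} is precisely $\sum_\mu \tfrac\mu k\!\left(\beta_\mu - \tfrac12\right)$ while $\sum_\mu\!\left(\beta_\mu - \tfrac12\right) = 0$ by the symmetry of $\beta_\mu$ about $\tfrac12$. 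Dividing the summed Iseki identity by $2$ yields \eqref{apostolprop} exactly.
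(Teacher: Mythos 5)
Your proposal is correct, and it is essentially the paper's own argument: the paper does not prove this proposition directly but cites Apostol \cite[\S3.6]{apostol}, and the derivation given there is exactly what you reconstruct --- specialize Iseki's formula to $\alpha = \mu/k$, $\beta_\mu = \{h\mu/k\}$, sum over $\mu = 1, \dots, k-1$, use the $i$-periodicity of $\lambda$ and the reflection $\mu \mapsto k-\mu$ to obtain the doubling identity, use the bijection $\mu \mapsto (-h\mu) \bmod k$ (where $hH \equiv -1 \pmod{k}$ is forced) to identify the transformed term, and evaluate the elementary sums to produce $\left(\tfrac{\pi z}{12} - \tfrac{\pi}{12z}\right)\left(1 - \tfrac{1}{k}\right) + \pi i s(h,k)$. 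Your computations of the quadratic sums and of the Dedekind-sum term check out, so the argument is complete.
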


\noindent
With this fact and Lemma~\ref{logfunlem}, we may finally provide the desired logarithmic version of the functional equation for \(\eta(\tau)\).

\begin{thm} \label{logfun}
For \(\Re{z} > 0\) and \(h, k, H \in \Z\) with \(k > 0\), \(\gcd(h,k) = 1\), and \(hH \equiv -1 \pmod{k}\), we have \begin{equation}
    \sum_{n=1}^\infty \lambda\left(\frac{n}{k}(z - ih)\right) = \sum_{n=1}^\infty \lambda\left(\frac{n}{k}(z^{-1} - iH)\right) + \frac{1}{k}\left(\frac{\pi}{12z} - \frac{\pi z}{12}\right) + \frac{1}{2}\log{z} + \pi is(h, k).
\end{equation}
\end{thm}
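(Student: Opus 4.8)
The plan is to derive Theorem~\ref{logfun} by combining the cited transformation \eqref{apostolprop}, which controls only the terms with $n \not\equiv 0 \pmod{k}$, with Lemma~\ref{logfunlem}, which will supply exactly the contribution of the terms with $n \equiv 0 \pmod{k}$. Concretely, I would split each of the two full sums in the statement into the part over $n \not\equiv 0 \pmod{k}$ and the part over $n = mk$, $m \geq 1$. Since $\lambda(\gamma z) \sim e^{-2\pi\gamma z}$ as $\gamma \to \infty$ and $\Re z > 0$, every one of these pieces converges absolutely, so this regrouping is legitimate.

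For the diagonal part the key observation is that $\lambda$ is invariant under integer shifts in the imaginary direction: from $\lambda(x) = \sum_{m \geq 1} e^{-2\pi m x}/m$ one sees that $\lambda(x + i\ell) = \lambda(x)$ for every $\ell \in \Z$, because $e^{-2\pi m i \ell} = 1$. Applying this with the integers $\ell = -mh$ and $\ell = -mH$ respectively gives $\lambda(m(z - ih)) = \lambda(mz)$ and $\lambda(m(z^{-1} - iH)) = \lambda(m/z)$. Hence the $n \equiv 0 \pmod{k}$ parts of the left- and right-hand sums are exactly $\sum_{m \geq 1}\lambda(mz)$ and $\sum_{m \geq 1}\lambda(m/z)$, precisely the two sums related by Lemma~\ref{logfunlem}.

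Putting the pieces together, I would feed \eqref{apostolprop} in for the $n \not\equiv 0 \pmod{k}$ sum on the left and Lemma~\ref{logfunlem} in for $\sum_{m \geq 1}\lambda(mz)$. The $n \not\equiv 0$ sum over $z^{-1} - iH$ produced by \eqref{apostolprop} recombines with $\sum_{m \geq 1}\lambda(m/z)$ into the full sum $\sum_{n \geq 1}\lambda(\tfrac{n}{k}(z^{-1} - iH))$ that appears on the right, and the Dedekind contribution $\pi i s(h,k)$ carries over directly. It then remains to check that the leftover elementary terms collapse correctly: the term $\big(\tfrac{\pi z}{12} - \tfrac{\pi}{12z}\big)\big(1 - \tfrac1k\big)$ from \eqref{apostolprop} and the term $\tfrac12\log z - \big(\tfrac{\pi z}{12} - \tfrac{\pi}{12z}\big)$ from Lemma~\ref{logfunlem} together give a coefficient $(1 - \tfrac1k) - 1 = -\tfrac1k$ on $\tfrac{\pi z}{12} - \tfrac{\pi}{12z}$, i.e.\ exactly $\tfrac1k\big(\tfrac{\pi}{12z} - \tfrac{\pi z}{12}\big) + \tfrac12\log z$, which is the claimed right-hand side.

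I expect no serious conceptual obstacle here, as the argument is essentially bookkeeping, but the step demanding the most care is the periodicity reduction. One must ensure that both $m(z - ih)$ and $mz$ lie in the region $\Re > 0$ where $\lambda$ and the chosen branch of the logarithm are unambiguous, so that $\lambda(m(z - ih)) = \lambda(mz)$ holds as an equality of actual branch values rather than merely modulo $2\pi i$. Tracking the branch of $\log z$ consistently is legitimate because $\Re z > 0$ keeps $z$ off the branch cut on the negative imaginary axis, and this is the only place where a sign or branch error could plausibly enter.
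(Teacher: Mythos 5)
Your proposal is correct and follows essentially the same route as the paper's own proof: both use the invariance of $\lambda$ under imaginary integer shifts to identify the $n \equiv 0 \pmod{k}$ parts of the sums with the two sums in Lemma~\ref{logfunlem}, and then add equation~\eqref{apostolprop} to account for the $n \not\equiv 0 \pmod{k}$ terms. The only difference is presentational—you spell out the splitting of the full sums and the final bookkeeping of the elementary terms, which the paper leaves implicit.
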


\begin{proof}
Using the periodicity of \(\lambda\), we note that \begin{equation*}
    \sum_{r=1}^\infty \lambda(rz) = \hspace{-0.4cm} \sum_{\substack{n=1 \\ n \equiv 0 \Mod{k}}}^\infty \hspace{-0.4cm}\lambda\left(\frac{n}{k}(z - ih)\right) \quad \text{and} \quad \sum_{r=1}^\infty \lambda\left(\frac{r}{z}\right) = \hspace{-0.4cm} \sum_{\substack{n=1 \\ n \equiv 0 \Mod{k}}}^\infty \hspace{-0.4cm} \lambda\left(\frac{n}{k}(z^{-1} - iH)\right).
\end{equation*} Substituting this into (\ref{isekilem}) and adding equation (\ref{apostolprop}) yields the desired result.
\end{proof}

\subsection{Application of the Logarithmic Functional Equation to \(P(x)^\alpha\)}


We recall the generating function \begin{equation*}
    P(x) := \prod_{k=1}^\infty \frac{1}{1-x^k} = \sum_{n=0}^\infty p(n)x^n,
\end{equation*} which is holomorphic for \(x\) in the open unit disk. In deriving the Hardy-Ramanujan-Rademacher series formula for the partition function, we rely on the fact that the equation above holds analytically as well as formally. We extend this observation to \(p_\alpha(n)\) by showing that the generating function \(P(x)^\alpha\) is well-defined.

\begin{lem} \label{pxalphagen}
For \(x\) in the open unit disk and \(\alpha>0\), we have \begin{equation}
    \sum_{n=0}^\infty p_\alpha(n)x^n = \prod_{k=1}^\infty e^{-\alpha\log(1 - x^k)} =: P(x)^\alpha.
\end{equation}
\end{lem}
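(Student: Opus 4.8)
The plan is to make analytic sense of the infinite product and then match its Taylor coefficients to the formal definition of $p_\alpha(n)$ in (\ref{palphadef}). First I would check that each factor is holomorphic on the disk: for $|x| < 1$ we have $|x^k| < 1$, so $1 - x^k$ lies in the open disk of radius $1$ about $1$, which is contained in the right half-plane. In particular $1 - x^k$ avoids the branch cut fixed in the Remark and stays away from $0$, so $\log(1-x^k)$ is holomorphic in $x$ and each factor $(1-x^k)^{-\alpha} := e^{-\alpha\log(1-x^k)}$ is a nonvanishing holomorphic function on the open unit disk.

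Next I would prove that the product converges locally uniformly. Fixing $r \in (0,1)$ and using $\log(1-w) = -w + O(w^2)$ near $w = 0$, there is a constant $C_r$ with $|\log(1-x^k)| \le C_r\,|x|^k$ for all $|x| \le r$ and all $k$. Hence $S(x) := \sum_{k=1}^\infty \log(1-x^k)$ converges absolutely and uniformly on $\{|x| \le r\}$, so $S$ is holomorphic on the open unit disk and $F(x) := e^{-\alpha S(x)}$ is holomorphic there. Writing $F_N(x) := \prod_{k=1}^N (1-x^k)^{-\alpha} = e^{-\alpha S_N(x)}$ for the partial products, local uniform convergence of $S_N$ to $S$ together with continuity of the exponential gives $F_N \to F$ locally uniformly on the disk.

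Finally I would identify the Taylor coefficients of $F$. The generalized binomial theorem gives, with the same branch and for each $k$ and $|x| < 1$,
\[
    (1-x^k)^{-\alpha} = \sum_{j=0}^\infty \binom{\alpha + j - 1}{j} x^{kj},
\]
so the formal series expansion of $\prod_{k}(1-x^k)^{-\alpha}$ that defines $p_\alpha(n)$ coincides with the product of these convergent series. For a fixed $n$ and any $N \ge n$, every factor with index $k > n$ contributes, beyond its constant term $1$, only monomials of degree exceeding $n$; hence $[x^n]F_N = p_\alpha(n)$ for all $N \ge n$. Extracting the $n$-th coefficient by the Cauchy integral formula on a circle $|x| = r < 1$ and invoking the uniform convergence $F_N \to F$ on that circle to exchange limit and integral, I obtain $[x^n]F = \lim_{N\to\infty}[x^n]F_N = p_\alpha(n)$, so $F(x) = \sum_{n=0}^\infty p_\alpha(n)x^n$, as claimed. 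The one genuinely delicate point is this interchange of the infinite product with coefficient extraction; it is exactly the local uniform convergence secured in the second step that makes it legitimate, so the estimate $|\log(1-x^k)| \le C_r|x|^k$ is the engine of the whole argument.
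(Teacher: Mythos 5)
Your proposal is correct and follows essentially the same route as the paper: both arguments establish local uniform convergence of $\sum_{k}\log(1-x^k)$ on closed subdisks $|x|\le r$ (you via the bound $|\log(1-x^k)|\le C_r|x|^k$ and the M-test, the paper via the ratio test), conclude that $P(x)^\alpha$ is holomorphic on the open unit disk, and then identify its Taylor coefficients with the formally defined $p_\alpha(n)$. Your final step---matching coefficients via the generalized binomial theorem and Cauchy's integral formula---is simply an explicit justification of the interchange that the paper compresses into the remark that the formal equivalence makes holomorphy sufficient.
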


\begin{proof}
Start by observing that our branch of the logarithm ensures that \(\exp(-\alpha\log(1-x^k))\) is formally equivalent to \((1-x^k)^{-\alpha}\). Thus, because the \(p_\alpha(n)\) are defined in terms of the formal equivalence in (\ref{palphadef}), it suffices to show that \(P(x)^\alpha\) as defined above is holomorphic for \(|x| < 1\). For this purpose, let \(0 < r < 1\), and observe that \(-\sum_{k=1}^\infty \alpha\log(1-x^k)\) converges uniformly for \(|x| \leq r\) by the ratio test, as \begin{equation*}
    \lim_{k \to \infty} \left|\frac{\log(1-x^{k+1})}{\log(1-x^k)}\right| = \lim_{k \to \infty} \left|\frac{-\log(x)x^{k+1}/(1-x^{k+1})}{-\log(x)x^k/(1-x^k)}\right| = \lim_{k \to \infty} \left|x \cdot \frac{1-x^k}{1-x^{k+1}}\right| = |x| \leq r.
\end{equation*}

\noindent
Thus, \(\prod_{k=1}^\infty e^{-\alpha\log(1-x^k)}\) converges uniformly for \(|x| \leq r\), from which it follows that \(P(x)^\alpha\) is holomorphic in every closed disk \(|x| \leq r\) and hence in the open unit disk \(|x| < 1\) as desired.

\end{proof}

\noindent
We are finally ready for the main result of this section, which expresses the functional equation for \(\eta(\tau)\) in terms of \(P(x)^\alpha\).

\begin{thm}[Modified Functional Equation]\label{functional equation}
For \(\Re{z} > 0\), \(\alpha>0\), \(h, k, H \in \Z\) with \(k > 0\), \(\gcd(h, k) = 1\), and \(hH \equiv -1 \pmod{k}\), we have \begin{equation}\label{functional}
    P(x)^\alpha = e^{\pi i\alpha s(h, k)}\left(\frac{z}{k}\right)^{\alpha/2}\exp\left(\frac{\alpha\pi}{12k}\left(\frac{k}{z} - \frac{z}{k}\right)\right)P(x')^\alpha,
\end{equation} where \begin{align} \label{xprime}
    x & := \exp\left(\frac{2\pi}{k}\left(ih - \frac{z}{k}\right)\right), \qquad x' := \exp\left(\frac{2\pi}{k}\left(iH - \frac{k}{z}\right)\right),
\end{align} and real powers are given for the precise branch of the logarithm described in Section 2.1.
\end{thm}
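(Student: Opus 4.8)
The plan is to reduce the claim to the logarithmic functional equation of Theorem~\ref{logfun} by passing through the definition of $P(x)^\alpha$ in Lemma~\ref{pxalphagen} and then exponentiating. Write $w_n := \frac{n}{k}\!\left(\frac{z}{k}-ih\right)$ and $w_n' := \frac{n}{k}\!\left(\frac{k}{z}-iH\right)$. With $x = \exp\!\left(\frac{2\pi}{k}(ih - z/k)\right)$ a direct computation gives $x^n = e^{-2\pi w_n}$, and since $\Re w_n = \frac{n\,\Re z}{k^2} > 0$ I may invoke the identity $\lambda(w) = -\log(1-e^{-2\pi w})$ to write $-\log(1-x^n) = \lambda(w_n)$. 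By Lemma~\ref{pxalphagen} this yields the clean exponential form $P(x)^\alpha = \exp\!\big(\alpha\sum_{n=1}^\infty \lambda(w_n)\big)$, and the analogous computation for $x' = \exp\!\left(\frac{2\pi}{k}(iH - k/z)\right)$ gives $P(x')^\alpha = \exp\!\big(\alpha\sum_{n=1}^\infty \lambda(w_n')\big)$, where now $\Re w_n' = \frac{n\,\Re z}{|z|^2} > 0$. Working with these exponential forms, rather than taking logarithms of the products, lets me avoid any multivaluedness on the left-hand sides.

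Next I would apply Theorem~\ref{logfun} with $z$ replaced by $z/k$; this is legitimate since $\Re(z/k) > 0$, and the substitution sends $z^{-1}\mapsto k/z$, so that the two series appearing are precisely $\sum_n \lambda(w_n)$ and $\sum_n \lambda(w_n')$. The theorem then reads
\begin{equation*}
\sum_{n=1}^\infty \lambda(w_n) = \sum_{n=1}^\infty \lambda(w_n') + \frac{1}{k}\!\left(\frac{\pi k}{12z} - \frac{\pi z}{12k}\right) + \frac{1}{2}\log\frac{z}{k} + \pi i\, s(h,k).
\end{equation*}
Multiplying by $\alpha$ and exponentiating, the first term on the right produces $P(x')^\alpha$, the term $\frac{\alpha}{2}\log(z/k)$ produces the prefactor $(z/k)^{\alpha/2}$ (interpreted with the branch of Section~2.1, which is valid because $z/k$ lies in the open right half-plane and hence off the cut), and $\pi i\alpha\,s(h,k)$ produces $e^{\pi i\alpha s(h,k)}$. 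Finally, the elementary term simplifies as $\frac{\alpha}{k}\!\left(\frac{\pi k}{12z}-\frac{\pi z}{12k}\right) = \frac{\alpha\pi}{12z} - \frac{\alpha\pi z}{12k^2} = \frac{\alpha\pi}{12k}\!\left(\frac{k}{z}-\frac{z}{k}\right)$, which is exactly the exponent in \eqref{functional}. Collecting the factors gives the stated identity.

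I expect the main obstacle to be purely bookkeeping: since $\alpha$ need not be an integer, every identification of a logarithm or a power is branch-sensitive, so I must confirm at each step that the relevant quantity stays off the negative-imaginary-axis cut. The two facts that require care are $-\log(1-x^n) = \lambda(w_n)$---valid because $1-x^n$ lies in the disk $\{|\zeta-1|<1\}\subset\{\Re\zeta>0\}$, where the branch of Section~2.1 agrees with the principal branch used to define $\lambda$---and $\exp\!\big(\tfrac{\alpha}{2}\log(z/k)\big) = (z/k)^{\alpha/2}$, valid because $\Re(z/k)>0$. Once these are checked, the whole argument is a single substitution into Theorem~\ref{logfun} followed by exponentiation.
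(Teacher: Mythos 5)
Your proposal is correct and follows essentially the same route as the paper's own proof: apply Theorem~\ref{logfun} with $z/k$ in place of $z$, multiply by $\alpha$, and exponentiate, identifying the resulting exponential sums with $P(x)^\alpha$ and $P(x')^\alpha$ via Lemma~\ref{pxalphagen}. Your write-up is simply more explicit about the branch-cut bookkeeping (the identification $-\log(1-x^n)=\lambda(w_n)$ and the meaning of $(z/k)^{\alpha/2}$), which the paper leaves implicit.
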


\begin{proof}
Applying Theorem~\ref{logfun} with \(z/k\) in place of \(z\) and multiplying by \(\alpha\), we obtain \begin{equation*}
    \alpha\sum_{n=1}^\infty \lambda\left(\frac{n}{k}\left(\frac{z}{k} - ih\right)\right) =  \frac{\alpha\pi}{12k}\left(\frac{k}{z} - \frac{z}{k}\right) + \frac{\alpha}{2}\log\left(\frac{z}{k}\right) + \pi i\alpha s(h, k) +  \alpha\sum_{n=1}^\infty \lambda\left(\frac{n}{k}\left(\frac{k}{z} - iH\right)\right).
\end{equation*} Exponentiating both sides yields \begin{equation}
    \prod_{n=1}^\infty \exp\left(-\alpha \log(x)\right) = \exp\left(\frac{\alpha\pi}{12k}\left(\frac{k}{z} - \frac{z}{k}\right)\right)\exp\left(\frac{\alpha}{2}\log\left(\frac{z}{k}\right)\right)e^{\pi i\alpha s(h,k)}\prod_{n=1}^\infty \exp\left(-\alpha \log(x')\right)
\end{equation} for \(x\) and \(x'\) defined above, which is equivalent to (\ref{functional}).
\end{proof}

\section{Proof of the Series Formula for \(p_\alpha(n)\)}

In this section, we use Radamacher's circle method to prove the series formula for \(p_\alpha(n)\). We closely follow Apostol's proof of the $\alpha = 1$ case \cite[\S5.7]{apostol}.

\begin{proof}[Proof of Theorem~\ref{main theorem}]

Using Cauchy's residue theorem and Lemma~\ref{pxalphagen}, we can write \begin{align} \label{3.1} p_\alpha(n) = \frac{1}{2 \pi i} \int_C \frac{P(x)^{\alpha}}{x^{n+1}} \, dx,\end{align} where $C$ is any simple closed contour in the unit disk which encloses the origin. To evaluate this, we consider the change of variables $x = e^{2 \pi i \tau}$, under which the closed unit disk $|x| \leq 1$ is the image of the infinite vertical strip \(\{\tau : 0 \leq \Re \tau \leq 1, \,\, 0 \leq \Im \tau\}\). We start by recalling the Farey sequences \(F_N\), defined by enumerating the rational numbers in \([0, 1]\) with reduced denominators at most \(N\). In addition, for \(\gcd(h, k) = 1\), we let \(C(h, k)\) denote the Ford circle associated with \(h/k\), which has center \(h/k + i/(2k^2)\) and radius \(1/(2k^2)\) (details are given in \cite[\S5.6]{apostol}). As in Rademacher's original work, we integrate along the Rademacher paths $R(N)$ in the \(\tau\)-plane, consisting of the upper arcs of the Ford circles associated with $F_N$, with the intent to later take the limit as \(N \to \infty\) (depicted in Figure~\ref{r2}). For $N \geq 1$, we write (\ref{3.1}) as \begin{align} \label{palphaint}
    p_{\alpha}(n) = \int_{R(N)} P(e^{2 \pi i \tau})^{\alpha} e^{-2 \pi i n \tau} d\tau.
\end{align}
Decomposing \(R(N)\) into its component arcs, we may write the above integral as \begin{align} \int_{R(N)} = \sum_{k=1}^N \sum\limits_{\substack{0 \leq h < k \\ (h, k) = 1}} \int_{\gamma(h, k)} =: \sum_{h, k} \int_{\gamma(h, k)}, \end{align} where we define the right side as a shorthand for the double sum over \(h\) and \(k\), and $\gamma(h, k)$ is the upper arc of the Ford circle $C(h, k)$ of radius \(1/(2k^2)\) tangent to the real axis at \(h/k\). 

\begin{figure}
\begin{minipage}{.5\linewidth}
    \vspace{5pt}
    \centering
    \begin{tikzpicture}[line cap=round,line join=round,>=triangle 45,x=1cm,y=1cm, thick,scale=0.25]
\draw [line width=2pt,color=aqaqaq] (-16.623008274372093,-1.4705519580355455)-- (11.188001819156527,-1.3616932499059609);
\draw [shift={(-16.677437628436884,12.434953088728765)},line width=2pt,color=aqaqaq]  plot[domain=-1.5668821161344475:1.574710537455346,variable=\t]({1*13.905611570879278*cos(\t r)+0*13.905611570879278*sin(\t r)},{0*13.905611570879278*cos(\t r)+1*13.905611570879278*sin(\t r)});
\draw [shift={(11.133572465091735,12.54381179685835)},line width=2pt,color=aqaqaq]  plot[domain=1.5747105374553463:4.71630319104514,variable=\t]({1*13.905611570879278*cos(\t r)+0*13.905611570879278*sin(\t r)},{0*13.905611570879278*cos(\t r)+1*13.905611570879278*sin(\t r)});
\draw [line width=2pt,color=aqaqaq] (-2.7313021420734995,2.109197107052741) circle (3.447524820039072cm);
\draw [shift={(-2.7313021420734995,2.109197107052741)},line width=2pt] plot[domain=0.6595265983587923:2.489894476551889,variable=\t]({1*3.447524820039073*cos(\t r)+0*3.447524820039073*sin(\t r)},{0*3.447524820039073*cos(\t r)+1*3.447524820039073*sin(\t r)}) ;
\draw [line width=2pt,color=ffffff] (-5.472273772434161,4.200249929518844)-- (-2.7313021420734995,2.109197107052741);
\draw [line width=2pt,color=ffffff] (-0.006783975682795873,4.22164311720128)-- (-2.7313021420734995,2.109197107052741);
\draw [shift={(-16.677437628436884,12.434953088728765)},line width=2pt]  plot[domain=-0.6337684967357191:1.5747105374553463,variable=\t]({1*13.905611570879278*cos(\t r)+0*13.905611570879278*sin(\t r)},{0*13.905611570879278*cos(\t r)+1*13.905611570879278*sin(\t r)}) ;
\draw [ shift={(11.133572465091735,12.54381179685835)},line width=2pt]  plot[domain=1.5747105374553463:3.783189571646414,variable=\t]({1*13.905611570879278*cos(\t r)+0*13.905611570879278*sin(\t r)},{0*13.905611570879278*cos(\t r)+1*13.905611570879278*sin(\t r)}) ;
\draw [line width=2pt,color=ffffff] (-16.73186698250168,26.340458135493076)-- (-16.677437628436884,12.434953088728765);
\draw [line width=2pt,color=ffffff] (-16.677437628436884,12.434953088728765)-- (-5.472273772434161,4.200249929518844);
\draw [line width=2pt,color=ffffff] (11.133572465091735,12.54381179685835)-- (-0.006783975682795873,4.22164311720128);
\draw [line width=2pt,color=ffffff] (11.133572465091735,12.54381179685835)-- (11.079143111026942,26.44931684362266);
\begin{scriptsize}
\draw [fill=black] (-16.623008274372093,-1.4705519580355455) circle (2.5pt);
\draw[color=black] (-16.346709728056492,-3) node {\large $0$};
\draw [fill=black] (11.188001819156527,-1.3616932499059609) circle (2.5pt);
\draw[color=black] (11.64241862931025,-3) node {\large $1$};
\draw [fill=wrwrwr] (-16.73186698250168,26.340458135493076) circle (2pt);
\draw[color=black] (-17.933138056004406,26.88804669565975) node {\large $i$};
\draw [fill=wrwrwr] (11.079143111026942,26.44931684362266) circle (2pt);
\draw[color=black] (14.701958976066937,26.68126085963371) node {\large $i + 1$};
\draw [fill=black] (-2.8477417939116583,-1.4166323877588578) circle (2.5pt);
\draw[color=black] (-2.5, -3.3) node {\large $\frac{1}{2}$};
\end{scriptsize}
\end{tikzpicture}
\caption{The Rademacher path $R(2)$}
\end{minipage}
\begin{minipage}{.5\linewidth}
    \centering
    \label{zplane}
    \vspace{5pt}
    \begin{tikzpicture}[line cap=round,line join=round,x=1cm,y=1cm, thick,scale=0.565]
    \clip(-2.9,-8.777292400217647) rectangle (10.9,5.673001722083454);
\draw [line width=2pt,color=gray] (4.26,-0.58) circle (4.3cm);
\draw [line width=2pt,color=gray,domain=-5:13] plot(\x,{(-2.494-0*\x)/4.3});
\draw [line width=2pt,color=wrwrwr] (2.085918052888333,3.1299013042455917)-- (8.118080161419881,-2.478741021850621);
\draw [decoration={markings, mark=at position 0.3 with {\arrow{<}}, mark=at position 0.79 with {\arrow{>}}},
        postaction={decorate}, shift={(4.26,-0.58)},line width=2pt,color=black]  plot[domain=-0.4573451714395125:2.1008741114393823,variable=\t]({1*4.3*cos(\t r)+0*4.3*sin(\t r)},{0*4.3*cos(\t r)+1*4.3*sin(\t r)}) -- cycle ;
\begin{scriptsize}
\draw [fill=rvwvcq] (-0.04,-0.58) circle (2.5pt);
\draw[color=black] (0.3989113036340631,0.018158545360415168) node {\large$0$};
\draw [fill=rvwvcq] (4.26,-0.58) circle (2.5pt);
\draw[color=black] (4.26,-1.4885703605800993) node {\large $\frac{1}{2}$};
\draw [fill=wrwrwr,] (8.56,-0.58) circle (2pt);
\draw[color=black] (9.000963603003536,-0.03663159667378535) node {\large $1$};
\draw [fill=rvwvcq] (2.085918052888333,3.1299013042455917) circle (2.5pt);
\draw[color=black] (2.124800777711378,4.1548142689425545) node {\large $z_1(h, k)$};
\draw[color=black] (0.124800777711378,2.1548142689425545) node {\large $K$};
\draw [fill=rvwvcq] (8.118080161419881,-2.478741021850621) circle (2.5pt);
\draw[color=black] (9.713235449448142,-2.5295830592299096) node {\large $z_2(h, k)$};
\end{scriptsize}
\end{tikzpicture}
\caption{Path of integration in the $z$-plane}\label{r2}
\end{minipage}

\end{figure}

We now introduce a second change of variables given by \begin{align} z & = -ik^2 \left(\tau - \frac{h}{k}\right), \end{align} which maps the circle \(C(h, k)\) onto the circle $K$ of radius \(1/2\) centered at \(1/2\). Let \(z_1(h, k)\) and \(z_2(h,k)\) be the respective endpoints of the image of \(\gamma(h, k)\), and let \(x\) and \(x'\) be defined as in Theorem~\ref{functional equation}. Then \begin{align*} p_{\alpha}(n) & = \sum_{h,k} i k^{-2} e^{-\frac{2 \pi i n h}{k}} \int_{z_1(h, k)}^{z_2(h, k)} e^{\frac{2 n \pi z}{k^2}} P(x)^{\alpha}\,dz, \end{align*} from which the modified functional equation from Theorem \ref{functional equation} yields \begin{align*} p_\alpha(n) = \sum_{h, k} i k^{-\frac{\alpha}{2} - 2} e^{-\frac{2 \pi i n h}{k}} \omega^{(\alpha)}(h, k) \int_{z_1(h,k)}^{z_2(h,k)} e^{\frac{2 \pi n z}{k^2}} \Psi_k^{(\alpha)}(z) P(x')^{\alpha} \,dz, \end{align*} where \[\omega^{(\alpha)}(h, k) := e^{\alpha \pi i s(h, k)}, \quad \text{and} \quad \Psi_k^{(\alpha)}(z) := z^{\frac{\alpha}{2}} \exp\Big(\frac{\alpha \pi}{12 z} - \frac{\alpha \pi z}{12 k^2}\Big). \]

Let \(q = \lfloor \alpha/24 \rfloor\), and define \begin{equation*}
    Q^{(\alpha)}(x) := \sum_{m=0}^q p_\alpha(m) x^m.
\end{equation*} We proceed by separating out a part of the integral that corresponds to \(Q^{(\alpha)}(x)\) and showing that the remaining part goes to zero as \(N \to \infty\). In particular, we write \begin{align*}
    I_1(h,k) = \int_{z_1(h, k)}^{z_2(h, k)} \Psi_k^{(\alpha)}(z) e^{\frac{2 \pi n z}{k^2}} Q^{(\alpha)}(x') dz
\end{align*} and \begin{align*} \quad I_2(h,k) = \int_{z_1(h, k)}^{z_2(h, k)} \Psi_k^{(\alpha)}(z) e^{\frac{2 \pi n z}{k^2}}(P(x')^{\alpha} - Q^{(\alpha)}(x')) \,dz
\end{align*} to obtain \begin{align}\label{palphasum}
    p_{\alpha}(n) = \sum_{h, k}i k^{-2 - \frac{\alpha}{2}} e^{-\frac{2 \pi i n h}{k}} \omega^{(\alpha)}(h, k) \cdot (I_1(h, k) + I_2(h, k)).
\end{align}

We now show that \(I_2(h, k)\) is ``small'' for large \(N\) by considering the integral along the chord in the \(z\)-plane joining \(z_1(h, k)\) and \(z_2(h, k)\). Because $0 < \Re z \leq 1$ and $\Re(z^{-1}) \geq 1$ for \(z\) on the path of integration, we can write \begin{align}
   & \left| \Psi_{k}^{(\alpha)} (z) \cdot e^{\frac{2 n \pi z}{k^2}} \cdot \left\{ P(x')^{\alpha} - Q^{(\alpha)}(x')\right\} \right| \\
    & = |z|^{\frac{\alpha}{2}} \exp \left( \frac{\alpha \pi}{12} \Re(z^{-1}) - \frac{\alpha\pi}{12 k^2} \Re z + \frac{2 n \pi}{k^2}\Re z \right) \cdot \left| \sum_{m=q+1}^\infty p_\alpha(m) \exp(\frac{2 \pi i H m}{k} - \frac{2 \pi m}{z}) \right| \nonumber \\
    & \leq |z|^{\frac{\alpha}{2}} \exp \left( \frac{\alpha \pi}{12} \Re(z^{-1}) + \frac{2 n \pi}{k^2}\right) \sum_{m=q+1}^\infty p_\alpha(m) e^{-2 \pi m \Re(z^{-1})} \nonumber \\
    & \leq |z|^{\frac{\alpha}{2}} \sum_{m=q+1}^{\infty}p_\alpha(m)e^{-2\pi\left(m - \frac{\alpha}{24}\right)\Re(z^{-1})} \\
    &\leq |z|^{\frac{\alpha}{2}} \sum_{m=q+1}^{\infty}p_\alpha(m)e^{-2\pi\left(m - \frac{\alpha}{24}\right)} = |z|^{\frac{\alpha}{2}} e^{\frac{\alpha\pi}{12}} (P(e^{-2\pi})^\alpha - Q^{(\alpha)}(e^{-2\pi})).
\end{align} Since $|z| < \sqrt{2} k / N$ for \(z\) on the chord from \(z_1(h, k)\) to \(z_2(h, k)\), the integrand is less than $C (k / N)^{\alpha / 2}$ for some constant $C$ not depending on $N$. Thus, because the length of the chord is at most $2 \sqrt{2} k/N$, we have \begin{align}\label{I_2}
    |I_2(h, k)| & < \frac{C k^{\frac{\alpha}{2}+1}}{N^{\frac{\alpha}{2}+1}}.
\end{align} Substituting this bound into the sum of the \(I_2\) terms in (\ref{palphasum}) yields \begin{align*}
    \left| \sum_{h, k} i k^{- \frac{\alpha}{2} - 2} e^{-\frac{2 \pi i n h}{k}} \omega^{(\alpha)} (h, k)  I_2(h, k) \right| & < \sum_{k = 1}^N \sum_{\substack{0 \leq h < k \\ (h, k) = 1}} C k^{-1} N^{-\frac{\alpha}{2}-1} \leq C N^{-\frac{\alpha}{2}-1} \sum_{k=1}^N 1 = CN^{-\frac{\alpha}{2}}.
\end{align*} Thus, we have \begin{align}\label{simpi1i2}
    p_\alpha(n) & = \bigg(\sum_{k = 1}^N \sum_{\substack{0 \leq h < k \\ (h, k) = 1}} i k^{-\frac{\alpha}{2}-2} e^{-\frac{2 \pi i n h}{k}}\omega^{(a)}(h, k) I_1(h, k)\bigg) + O(N^{-\frac{\alpha}{2}}).
\end{align}

Next we consider $I_1(h, k)$. We can write \begin{align}\label{i1}
    I_1(h,k) = \int_{-K} - \int_0^{z_1(h, k)} - \int_{z_2(h, k)}^0 =: \mathord{\int_{-K}} - J_1 - J_2,
\end{align} where we omit the integrands for brevity, and where $-K$ indicates that we integrate in the negative direction along $K$. Because \(|z| \leq \sqrt{2}k/N\) on the paths of integration, we can bound the integrands of \(J_1\) and \(J_2\) by \begin{align} \label{J}
    &\left| \Psi^{(\alpha)}_k(z) e^{\frac{2 \pi n z}{k^2}} Q^{(\alpha)}(x') \right| \nonumber \\
    &\leq |z|^{\frac{\alpha}{2}} \exp\left( \frac{\alpha\pi}{12} \Re(z^{-1}) - \frac{\alpha\pi}{12 k^2} \Re z + \frac{2n \pi}{k^2}\Re z \right) \left| \sum_{m=0}^q p_\alpha(m) \exp(\frac{2 \pi i H m}{k} - \frac{2 \pi m}{z}) \right| \nonumber \\
    &\leq |z|^{\frac{\alpha}{2}} \exp\left( \frac{\alpha\pi}{12} + \frac{2\pi}{k^2} \left( n - \frac{\alpha}{24} \right) \Re z \right) \left| \sum_{m=0}^q p_\alpha(m) e^{-2\pi m} \right| \\
    & \leq \frac{e^{2n\pi} 2^{\frac{\alpha}{4}} k^{\frac{\alpha}{2}}}{N^{\frac{\alpha}{2}}}\left| \sum_{m=0}^q p_\alpha(m) e^{-2\pi m} \right|.
\end{align} The lengths of the arcs from 0 to $z_1(h,k)$ and $z_2(h,k)$ are less than $\pi|z_1(h, k) |$ and $\pi|z_2(h, k)|$, respectively, and both of these are bounded by \(\pi \sqrt{2} k/N\), so we get that $|J_1|, |J_2| < C_1 k^{\frac{\alpha}{2}+1} N^{-\frac{\alpha}{2}-1}$ for some constant \(C_1\).
 
Combining (\ref{simpi1i2}), (\ref{i1}), and the bounds for \(J_1\) and \(J_2\) above, we find that \begin{align}\label{finite sum}
    p_\alpha(n) & = \sum_{k = 1}^N \sum_{\substack{0 \leq h < k \\ (h, k) = 1}} i k ^{-\frac{\alpha}{2}-2} e^{-\frac{2 \pi i nh}{k}}\omega^{(\alpha)}(h, k) \int_{-K} \Psi_k^{(\alpha)} (z) e^{\frac{2n \pi z}{k^2}} Q^{(\alpha)}(x')\,dz + O(N^{-\frac{\alpha}{2}}),
\end{align} which in the limit as $N$ goes to infinity becomes \begin{align*}
    p_\alpha(n) & = \sum_{m=0}^q p_\alpha(m) \sum_{k = 1}^\infty \sum_{\substack{0 \leq h < k \\ (h, k) = 1}} i k ^{-\frac{\alpha}{2}-2} e^{-\frac{2 \pi i nh}{k}}\omega^{(\alpha)}(h, k) \\
    & \quad \cdot \int_{-K} z^{\frac{\alpha}{2}}\exp\left(\frac{2\pi nz}{k^2} + \frac{\alpha\pi}{12z} - \frac{\alpha\pi z}{12k^2} + \frac{2\pi imH}{k} - \frac{2\pi m}{z} \right) \,dz \\
    &= \sum_{m=0}^q p_\alpha(m) \sum_{k = 1}^\infty \sum_{\substack{0 \leq h < k \\ (h, k) = 1}} i k ^{-\frac{\alpha}{2}-2} e^{\frac{2\pi i}{k}(mH-nh)}\omega^{(\alpha)}(h, k) \\
    & \quad \cdot \int_{-K} z^{\frac{\alpha}{2}}\exp\left(\frac{2\pi z}{k^2} \nu_\alpha(n)^2 + \frac{2\pi}{z} \mu_\alpha(m)^2\right) \,dz \\
    &= \sum_{m=0}^q p_\alpha(m) \sum_{k = 1}^\infty i\frac{A_k^{(\alpha)}(n, m)}{k^{\frac{\alpha}{2}+2}} \int_{-K} z^{\frac{\alpha}{2}}\exp\left(\frac{2\pi z}{k^2} \nu_\alpha(n)^2 + \frac{2\pi}{z} \mu_\alpha(m)^2\right) \,dz.
\end{align*} To evaluate the integral on the right, we make the change of variables $t = 2\pi(\alpha/24 - m)/z$ to obtain \begin{align*}
    p_\alpha(n) & = 2\pi \sum_{m=0}^q p_\alpha(m) \sum_{k = 1}^\infty \frac{A_k^{(\alpha)}(n, m)}{k^{\frac{\alpha}{2}+2}} \left[ 2\pi\mu_\alpha(n)^2 \right]^{\frac{\alpha}{2}+1} \\
    & \quad \cdot \frac{1}{2\pi i} \int_{c-i\infty}^{c+i\infty} t^{-\frac{\alpha}{2}-2}\exp\left(t + \left(\frac{2\pi}{k} \nu_\alpha(n) \mu_\alpha(m)\right)^2 \frac{1}{t} \right) \,dz,
\end{align*} where $c = \alpha \pi/12$. Now recall that the modified Bessel function of the first kind satisfies \begin{align}
    I_\beta(z) & = \frac{(z/2)^\beta}{2 \pi i} \int_{c - \infty i}^{c + \infty i} t^{-\beta - 1} e^{t + \frac{z^2}{4t}} \, dt 
\end{align} for $c > 0, \Re(\nu) > 0$ \cite[p. 181]{watson}. Consequently, for $n \geq \alpha/24$, we find that \begin{align}
    p_\alpha(n) &= 2\pi \sum_{m=0}^q p_\alpha(m) \sum_{k = 1}^\infty \frac{A_k^{(\alpha)}(n, m)}{k^{\frac{\alpha}{2}+2}} \left( 2\pi\mu_\alpha(m)^2 \right)^{\frac{\alpha}{2}+1} \left(\frac{2\pi}{k} \nu_\alpha(n)\mu_\alpha(m)\right)^{-\frac{\alpha}{2}-1} I_{\frac{\alpha}{2}+1}\left(\frac{4\pi}{k}\nu_\alpha(n)\mu_\alpha(m)\right) \\
    &= \nu_\alpha(n)^{-\frac{\alpha}{2}-1}\sum_{m=0}^q \mu_\alpha(m)^{\frac{\alpha}{2}+1} p_\alpha(m) \sum_{k = 1}^\infty \frac{2\pi}{k} A_k^{(\alpha)}(n, m)I_{\frac{\alpha}{2}+1}\left(\frac{4\pi}{k}\nu_\alpha(n)\mu_\alpha(m)\right).
\end{align}

\end{proof}

\section{Applications of the Series Formula for \(p_\alpha(n)\)}

\subsection{Estimates of $p_\alpha(n)$}

In this section, we consider the error of the approximation \begin{equation}
    p_\alpha(n; \delta) := \nu_\alpha(n)^{-\frac{\alpha}{2}-1}\sum_{m=0}^q \mu_\alpha(m)^{\frac{\alpha}{2}+1} p_\alpha(m) \sum_{1 \leq k < \frac{2\pi}{\delta}\mu_\alpha(m)} \frac{2\pi}{k} A_k^{(\alpha)}(n,m)I_{\frac{\alpha}{2}+1}\left(\frac{4\pi}{k}\nu_\alpha(n)\mu_\alpha(m)\right).
\end{equation} for \(p_\alpha(n)\). Note in particular that in the limit as \(\delta \to 0^+\), we have \(p_\alpha(n; \delta) \to p_\alpha(n)\).

\begin{thm} \label{seriesboundlem}
For all \(\alpha > 0\), \(0 < \delta < 2\pi\mu_\alpha(0)\), and \(n > \alpha/24\), we have \begin{align}
    \left|p_\alpha(n) - p_\alpha(n; \delta)\right| &< \frac{C}{\delta} \frac{I_{\frac{\alpha}{2}+1}(2\delta\nu_\alpha(n))}{\nu_\alpha(n)^{\frac{\alpha}{2}+1}} < C\delta^{\frac{\alpha}{2}} \frac{I_{\frac{\alpha}{2}+1}(4\pi\mu_\alpha(0)\nu_\alpha(n))}{(2\pi\mu_\alpha(0)\nu_\alpha(n))^{\frac{\alpha}{2}+1}},
\end{align} where \begin{equation*}
    C := 4\pi^2\left( 1 + \frac{2}{\alpha} \right)\mu_\alpha(0) \sum_{m=0}^q \mu_\alpha(m)^{\frac{\alpha}{2}+1}p_\alpha(m).
\end{equation*}

\end{thm}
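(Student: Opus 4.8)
The plan is to recognize the left-hand side as the tail of the series in Theorem~\ref{main theorem}. Since $p_\alpha(n;\delta)$ truncates the inner $k$-sum to $1 \le k < \frac{2\pi}{\delta}\mu_\alpha(m)$, subtracting it from \eqref{main series} leaves
\[
p_\alpha(n) - p_\alpha(n;\delta) = \nu_\alpha(n)^{-\frac{\alpha}{2}-1}\sum_{m=0}^q \mu_\alpha(m)^{\frac{\alpha}{2}+1}p_\alpha(m)\!\!\sum_{k \ge \frac{2\pi}{\delta}\mu_\alpha(m)}\!\! \frac{2\pi}{k}A_k^{(\alpha)}(n,m)\, I_{\frac{\alpha}{2}+1}\!\left(\tfrac{4\pi}{k}\nu_\alpha(n)\mu_\alpha(m)\right).
\]
First I would apply the triangle inequality together with the elementary bound $\bigl|A_k^{(\alpha)}(n,m)\bigr| \le \phi(k) \le k$, which holds because each summand of the Kloosterman sum has modulus one; this cancels the $1/k$ weight and reduces the problem to estimating $\sum_{k} I_{\frac{\alpha}{2}+1}(\cdots)$.

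The analytic engine is the monotonicity of $z \mapsto I_\nu(z)/z^\nu$ on $(0,\infty)$ (writing $\nu:=\frac{\alpha}{2}+1$ for the order), which is immediate from the defining series, as $I_\nu(z)/z^\nu = 2^{-\nu}\sum_{j\ge0}\frac{(z/2)^{2j}}{j!\,\Gamma(\nu+j+1)}$ has nonnegative coefficients. For $k$ in the tail we have $k \ge \frac{2\pi}{\delta}\mu_\alpha(m)$, so the Bessel argument satisfies $\frac{4\pi}{k}\nu_\alpha(n)\mu_\alpha(m) \le 2\delta\nu_\alpha(n)$, and monotonicity gives
\[
I_\nu\!\left(\tfrac{4\pi}{k}\nu_\alpha(n)\mu_\alpha(m)\right) \le \Bigl(\tfrac{4\pi}{k}\nu_\alpha(n)\mu_\alpha(m)\Bigr)^{\!\nu}\frac{I_\nu(2\delta\nu_\alpha(n))}{(2\delta\nu_\alpha(n))^\nu},
\]
which pulls out the target factor $I_\nu(2\delta\nu_\alpha(n))/\nu_\alpha(n)^\nu$ and leaves a convergent tail $\sum_{k \ge 2\pi\mu_\alpha(m)/\delta} k^{-\nu}$ (convergent since $\nu>1$).

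Next I would estimate that tail by comparison with $\int x^{-\nu}\,dx$; since $\nu-1 = \alpha/2$, writing $L_m := \frac{2\pi}{\delta}\mu_\alpha(m)$ yields $L_m^\nu \sum_{k \ge L_m} k^{-\nu} \le 1 + \frac{L_m}{\nu-1} = 1 + \frac{4\pi\mu_\alpha(m)}{\alpha\delta}$. Summing over $m$ and using $\mu_\alpha(m) \le \mu_\alpha(0)$ together with $1 < \frac{2\pi\mu_\alpha(0)}{\delta}$ (from the hypothesis $\delta < 2\pi\mu_\alpha(0)$) collapses both contributions into the common factor $\frac{2\pi\mu_\alpha(0)}{\delta}\bigl(1+\frac{2}{\alpha}\bigr)$, which—after reinstating the $2\pi$ from the $\frac{2\pi}{k}$ weight—produces exactly the constant $C$ and hence the first inequality. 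The second inequality is then one further application of the same monotonicity: since $\delta < 2\pi\mu_\alpha(0)$ forces $2\delta\nu_\alpha(n) < 4\pi\mu_\alpha(0)\nu_\alpha(n)$, we get $I_\nu(2\delta\nu_\alpha(n))/(2\delta\nu_\alpha(n))^\nu < I_\nu(4\pi\mu_\alpha(0)\nu_\alpha(n))/(4\pi\mu_\alpha(0)\nu_\alpha(n))^\nu$, and the powers of $2$ and $\delta$ reorganize (again via $\nu-1=\alpha/2$) to give the stated right-hand side.

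I expect the main obstacle to be the precise bookkeeping in the tail estimate: correctly matching the integer truncation point $\lceil L_m\rceil$ against $L_m$, tracking the power $L_m^\nu$ so that it exactly absorbs the $k^{-\nu}$ decay, and combining the resulting ``$1$'' and ``$L_m/(\nu-1)$'' terms into a single clean multiple of $\mu_\alpha(0)/\delta$ without losing the factor $(1+2/\alpha)$. Everything else is a routine chain of the one monotonicity inequality.
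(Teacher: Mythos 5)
Your proposal is correct and matches the paper's proof essentially step for step: the same tail decomposition of the series from Theorem~\ref{main theorem}, the trivial bound $|A_k^{(\alpha)}(n,m)| \le k$, the Bessel ratio inequality $I_\nu(x)/I_\nu(y) < (x/y)^{\nu}$ applied with $y = 2\delta\nu_\alpha(n)$, the same integral comparison giving $1 + \frac{4\pi}{\alpha\delta}\mu_\alpha(m)$, the same use of $1 < \frac{2\pi}{\delta}\mu_\alpha(0)$ and $\mu_\alpha(m) \le \mu_\alpha(0)$ to assemble the constant $C$, and a second application of the same ratio inequality to pass to the bound involving $I_{\frac{\alpha}{2}+1}(4\pi\mu_\alpha(0)\nu_\alpha(n))$. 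The only difference is cosmetic: the paper cites this ratio inequality from Paris \cite{paris}, whereas you derive it directly from the positivity of the coefficients of $I_\nu(z)/z^{\nu}$, a perfectly valid, self-contained substitute.
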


\begin{proof}
Start by noting that \begin{equation}
    \left| A_k^{(\alpha)}(n, m) \right| \leq \sum_{\substack{0 \leq h < k \\ (h, k) = 1}} \left| e^{\alpha \pi i s(h, k) + \frac{2 \pi i}{k}(mH - nh)} \right| = \sum_{\substack{0 \leq h < k \\ (h, k) = 1}} 1 \leq k.
\end{equation} Moreover, using the fact from \cite{paris} that for $0 < x < y$ and $\nu > 1$, the modified Bessel function of the first kind satisfies
\begin{align}\label{paris}
    \frac{I_\nu(x)}{I_\nu(y)} < \left( \frac{x}{y} \right)^{\nu},
\end{align} we have that \begin{align*}
    \sum_{k \geq \frac{2\pi}{\delta}\mu_\alpha(m)} \frac{I_{\frac{\alpha}{2}+1}\left(\frac{4\pi}{k}\nu_\alpha(n)\mu_\alpha(m)\right)}{I_{\frac{\alpha}{2}+1}(2\delta\nu_\alpha(n))} & < \sum_{k \geq \frac{2\pi}{\delta}\mu_\alpha(m)} \left(\frac{2\pi}{k\delta}\mu_\alpha(m)\right)^{\frac{\alpha}{2}+1} \\ & < 1 + \int_{\frac{2\pi}{\delta}\mu_\alpha(m)}^\infty \left(\frac{2\pi}{t\delta}\mu_\alpha(m)\right)^{\frac{\alpha}{2}+1}\,dt \\ & = 1 + \frac{4\pi}{\alpha\delta}\mu_\alpha(m)
\end{align*} for \(0 \leq m \leq q\). Thus, we find that \begin{align*}
    \nu_\alpha(n)^{\frac{\alpha}{2}+1}\left|p_\alpha(n) - p_\alpha(n; \delta)\right| &\leq 2\pi \sum_{m=0}^q \mu_\alpha(m)^{\frac{\alpha}{2}+1}p_\alpha(m) \sum_{k \geq \frac{2\pi}{\delta}\mu_\alpha(m)} I_{\frac{\alpha}{2}+1}\left(\frac{4\pi}{k}\nu_\alpha(n)\mu_\alpha(m)\right) \\
    &< 2\pi I_{\frac{\alpha}{2}+1}(2\delta\nu_\alpha(n)) \sum_{m=0}^q \mu_\alpha(m)^{\frac{\alpha}{2}+1}p_\alpha(m) \left[ 1 + \frac{4\pi}{\alpha\delta}\mu_\alpha(m) \right].
\end{align*} Since \(1 < \frac{2\pi}{\delta}\mu_\alpha(0)\) and \(\mu_\alpha(m) \leq \mu_\alpha(0)\), it follows that \begin{align*}
    |p_\alpha(n) - p_\alpha(n; \delta)| &< \frac{4\pi^2}{\delta} \frac{I_{\frac{\alpha}{2}+1}(2\delta\nu_\alpha(n))}{\nu_\alpha(n)^{\frac{\alpha}{2}+1}} \left( 1 + \frac{2}{\alpha} \right)\mu_\alpha(0) \sum_{m=0}^q \mu_\alpha(m)^{\frac{\alpha}{2}+1}p_\alpha(m),
\end{align*} or applying the Paris inequality a second time using \(2\delta\nu_\alpha(n) < 4\pi\mu_\alpha(0)\nu_\alpha(n)\), \begin{align*}
    |p_\alpha(n) - p_\alpha(n; \delta)| &< 4\pi^2 \delta^{\frac{\alpha}{2}} \frac{I_{\frac{\alpha}{2}+1}(4\pi\mu_\alpha(0)\nu_\alpha(n))}{(2\pi\mu_\alpha(0)\nu_\alpha(n))^{\frac{\alpha}{2}+1}}  \left( 1 + \frac{2}{\alpha} \right)\mu_\alpha(0) \sum_{m=0}^q \mu_\alpha(m)^{\frac{\alpha}{2}+1}p_\alpha(m).
\end{align*}
\end{proof}

\noindent
We are now in a position to prove the simple asymptotic formula for \(p_\alpha(n)\) stated in the introduction.

\begin{proof}[Proof of Corollary~\ref{asymp}]

Observe that since \(\frac{4\pi}{\delta}\mu_\alpha(m)\) is strictly increasing in \(m\), there exists a \(0 < \delta < 2\pi\mu_\alpha(0)\) such that \(\frac{2\pi}{\delta}\mu_\alpha(m) \leq 2\) for \(0 < m \leq q\) and so \begin{equation*}
    p_\alpha(n; \delta) = 2\pi\left(\frac{\mu_\alpha(0)}{\nu_\alpha(n)}\right)^{\frac{\alpha}{2}+1} I_{\frac{\alpha}{2}+1}\left(4\pi\nu_\alpha(n)\mu_\alpha(0)\right) = 2\pi \frac{I_{\frac{\alpha}{2}+1}\left(\frac{\pi\alpha}{6} \lambda_\alpha(n) \right)}{\lambda_\alpha(n)^{\frac{\alpha}{2}+1}}.
\end{equation*} Moreover, by Theorem \ref{seriesboundlem}, we have \begin{equation*}
    |p_\alpha(n) - p_\alpha(n; \delta)| \leq C \frac{I_{\frac{\alpha}{2}+1}(2\delta\nu_\alpha(n))}{\nu_\alpha(n)^{\frac{\alpha}{2}+1}}
\end{equation*} for some constant \(C\). Using the fact that \(I_\nu(z) \sim e^z/\sqrt{2 \pi z}\) from \cite[10.30.4]{nist}, we easily verify that \(C\nu_\alpha(n)^{-\frac{\alpha}{2}-1}I_{\frac{\alpha}{2}+1}(2\delta\nu_\alpha(n)) \ll p_\alpha(n)\), from which it follows that \begin{equation*}
    p_\alpha(n) \sim p_\alpha(n; \delta) \sim \frac{ e^{\frac{\alpha\pi}{6}\lambda_\alpha(n)}}{\lambda_\alpha(n)^{\frac{\alpha + 3}{2}}}.
\end{equation*}

\end{proof}

\noindent
Theorem \ref{seriesboundlem} also allows us to derive a finite exact formula for \(p_\alpha(n)\) when \(\alpha\) is rational. This is made possible by a formula for the denominator of \(p_\alpha(n)\) from \cite{chan}, which states that if \(\alpha = a/b\) for coprime \(a, b \in \Z\) with $b > 0$, then \begin{equation*}
    \operatorname{denom}(p_\alpha(n)) := b^n \prod_{p \mid b} p^{\operatorname{ord}_p(n!)},
\end{equation*} where \(\operatorname{ord}_p(n)\) denotes the multiplicity of a prime \(p\) as a factor of \(n\).

\begin{cor}\label{rational}
Let $\alpha, \varepsilon > 0$ and $n > \alpha/24$ with $\alpha$ rational. Then \begin{equation}
    p_\alpha(n) = \frac{\lfloor Dp_\alpha(n; \delta) \rceil}{D},
\end{equation} where \(D = \operatorname{denom}(p_\alpha(n))\) and \begin{equation*}
    \delta := \left( \frac{(2\pi\mu_\alpha(0)\nu_\alpha(n))^{\frac{\alpha}{2}+1}}{2DCI_{\frac{\alpha}{2}+1}(4\pi\mu_\alpha(0)\nu_\alpha(n))} \right)^{\frac{2}{\alpha}},
\end{equation*} with \(C\) defined as in Theorem \ref{seriesboundlem}.
\end{cor}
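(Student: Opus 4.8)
The plan is to leverage the rationality of $p_\alpha(n)$ to round the truncated series to its exact value. First I would record the one arithmetic input: since $\alpha\in\Q$, the denominator formula stated just before the corollary shows that $D=\operatorname{denom}(p_\alpha(n))$ is a positive integer with $Dp_\alpha(n)\in\Z$. The entire corollary then reduces to the quantitative claim that $p_\alpha(n;\delta)$ approximates $p_\alpha(n)$ to within $\tfrac{1}{2D}$: if $|p_\alpha(n)-p_\alpha(n;\delta)|<\tfrac{1}{2D}$, then $|Dp_\alpha(n)-Dp_\alpha(n;\delta)|<\tfrac12$, so the integer $Dp_\alpha(n)$ is the unique integer within distance $\tfrac12$ of $Dp_\alpha(n;\delta)$. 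Hence $\lfloor Dp_\alpha(n;\delta)\rceil=Dp_\alpha(n)$, and dividing by $D$ yields the asserted identity. The strictness of the inequality removes any ambiguity in the nearest-integer rounding.

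Second, I would check that the prescribed $\delta$ is precisely the value that drives the error estimate of Theorem~\ref{seriesboundlem} down to $\tfrac{1}{2D}$. Substituting the given $\delta$ into the second (Paris-type) bound, namely $C\delta^{\alpha/2}I_{\frac{\alpha}{2}+1}(4\pi\mu_\alpha(0)\nu_\alpha(n))/(2\pi\mu_\alpha(0)\nu_\alpha(n))^{\frac{\alpha}{2}+1}$, the quantity $\delta^{\alpha/2}=(2\pi\mu_\alpha(0)\nu_\alpha(n))^{\frac{\alpha}{2}+1}/\bigl(2DC\,I_{\frac{\alpha}{2}+1}(4\pi\mu_\alpha(0)\nu_\alpha(n))\bigr)$ cancels exactly the Bessel and power factors, leaving $\tfrac{1}{2D}$. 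Because Theorem~\ref{seriesboundlem} provides a strict inequality, this gives $|p_\alpha(n)-p_\alpha(n;\delta)|<\tfrac{1}{2D}$, which is exactly the bound required in the first paragraph.

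Third, the one genuine technical point, and the step I expect to be the main obstacle, is verifying the hypothesis $0<\delta<2\pi\mu_\alpha(0)$ needed to invoke Theorem~\ref{seriesboundlem}; this is not cosmetic, since if $\delta\ge 2\pi\mu_\alpha(0)$ the truncation $p_\alpha(n;\delta)$ is an empty sum and the formula breaks. Positivity of $\delta$ is immediate, and unwinding the definition shows $\delta<2\pi\mu_\alpha(0)$ is equivalent to the inequality $2\pi\mu_\alpha(0)\nu_\alpha(n)^{\frac{\alpha}{2}+1}<2DC\,I_{\frac{\alpha}{2}+1}(4\pi\mu_\alpha(0)\nu_\alpha(n))$. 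I would establish this for every $n>\alpha/24$ using only $D\ge 1$, the single $m=0$ summand $\mu_\alpha(0)^{\frac{\alpha}{2}+1}p_\alpha(0)=\mu_\alpha(0)^{\frac{\alpha}{2}+1}$ as a lower bound for the sum defining $C$, and the elementary bound $I_\nu(z)>\tfrac{(z/2)^\nu}{\Gamma(\nu+1)}$ (the leading term of the everywhere-positive Bessel series), valid for all $z>0$ and hence requiring no asymptotics. After these substitutions the factor $\nu_\alpha(n)^{\frac{\alpha}{2}+1}$ cancels and the inequality collapses to the $n$-independent estimate $4\pi\bigl(1+\tfrac{2}{\alpha}\bigr)(2\pi)^{\frac{\alpha}{2}+1}\mu_\alpha(0)^{\alpha+2}/\Gamma\bigl(\tfrac{\alpha}{2}+2\bigr)>1$, which holds for every $\alpha>0$.
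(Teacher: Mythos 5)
Your proposal is correct, and its core is exactly the paper's proof: the paper simply substitutes the prescribed $\delta$ into the second bound of Theorem~\ref{seriesboundlem}, observes that the right-hand side collapses to $\frac{1}{2D}$, and concludes that $Dp_\alpha(n)$ is the nearest integer to $Dp_\alpha(n;\delta)$ --- your first two paragraphs, in the same order. Your third paragraph, however, goes beyond the published argument: the paper invokes Theorem~\ref{seriesboundlem} without ever checking its hypothesis $0<\delta<2\pi\mu_\alpha(0)$, even though this hypothesis is genuinely needed (if $\delta\ge 2\pi\mu_\alpha(0)$ then $p_\alpha(n;\delta)$ is an empty sum and the conclusion is false, and both applications of the Paris inequality in the proof of Theorem~\ref{seriesboundlem} require it). Your reduction of that hypothesis to the $n$-independent inequality $4\pi\bigl(1+\tfrac{2}{\alpha}\bigr)(2\pi)^{\frac{\alpha}{2}+1}\mu_\alpha(0)^{\alpha+2}/\Gamma\bigl(\tfrac{\alpha}{2}+2\bigr)>1$ is algebraically right (using $D\ge 1$, the $m=0$ term of $C$, and the leading term of the Bessel series), and the inequality is true: writing $x=\alpha/2$ it equals $4\pi(\pi/6)^{x+1}x^{x}/\Gamma(x+1)$, which tends to $2\pi^2/3$ as $x\to 0^{+}$, grows like a constant times $(\pi e/6)^{x}/\sqrt{x}$ as $x\to\infty$, and has a single interior minimum of roughly $3.4$ near $x\approx 1.3$. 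The one thing your sketch leaves unfinished is an actual proof of that last calculus fact (it does not ``collapse'' to a triviality, since the function dips substantially before growing), but this is an elementary single-variable verification; with it supplied, your argument is a strictly more complete proof than the one in the paper.
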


\begin{proof}
Observe that by Theorem~\ref{seriesboundlem}, we have \begin{align}
    |p_\alpha(n) - p_\alpha(n; \delta)| &< C\delta^{\frac{\alpha}{2}} \frac{I_{\frac{\alpha}{2}+1}(4\pi\mu_\alpha(0)\nu_\alpha(n))}{(2\pi\mu_\alpha(0)\nu_\alpha(n))^{\frac{\alpha}{2}+1}} = \frac{1}{2D}.
\end{align} Thus, \(D|p_\alpha(n) - p_\alpha(n; \delta)| < 1/2\), implying that \(Dp_\alpha(n)\) is the nearest integer to \(Dp_\alpha(n; \delta)\).
\end{proof}

\subsection{Hyperbolicity of the Jensen Polynomials of $p_\alpha(n)$}

In this section, we demonstrate how the asymptotics of $p_{\alpha}(n)$ in this paper can be used to generalize a recent hyperbolicity result for the usual partition function. 

\begin{proof}[Proof of Theorem~\ref{jensen}]
Set \begin{equation}
    m = \frac{\alpha}{24} \quad \text{and} \quad c_0 = \log \left(\sqrt{\frac{12}{\alpha}} \cdot \left(\frac{\alpha}{24}\right)^{\frac{\alpha+3}{4}}\right).\nonumber
\end{equation} Then by Corollary~\ref{asymp},  \begin{align*}
    p_\alpha(n) \sim e^{c_0 + 4 \pi \sqrt{mn}} n^{-\frac{\alpha + 3}{4}}.
\end{align*} Thus, as in \cite[\S3]{griffin}, we have \begin{align*}
    \log \left( \frac{p_\alpha(n + j)}{p_\alpha(n)} \right) \sim 4 \pi \sqrt{m} \sum_{i = 1}^\infty \binom{1/2}{i} \frac{j^i}{n^{i - 1/2}} - \frac{\alpha +3}{4} \sum_{i = 1}^\infty \frac{(-1)^{i - 1}j^i}{i n^i},
\end{align*} from which it is clear that $p_\alpha(n)$ satisfies the conditions of Theorem 3 from \cite{griffin} with $A(n) = 2\pi \sqrt{m/n} + O(1 / n)$ and $\delta(n) = (\pi / 2)^{1/2} m^{1/4} n^{-3/4} + O(n^{-5/4})$. It follows immediately that for all \(d\) the Jensen polynomials associated with \(p_\alpha(n)\) are hyperbolic for sufficiently large \(n\).
\end{proof}

\begin{remark}
The proof of Theorem \ref{jensen} follows \cite[\S3]{griffin}. In particular, we consider the renormalization of the Jensen polynomials given by \begin{align}
\widehat{J}_{p_{\alpha}}^{d, n}(X) & = \frac{\delta(n)^{-d}}{p_\alpha(n)} \cdot J_{p_\alpha}^{d, n}\left( \frac{\delta(n)X - 1}{\exp(A(n))} \right).
\end{align} Theorem \ref{jensen} follows from the fact that for fixed $d$, \begin{align}
    \lim_{n \to \infty} \widehat{J}_{p_{\alpha}}^{d, n}(X) = H_d(x),
\end{align} where $H_d(x)$ is the degree $d$ renormalized Hermite polynomial in \cite{griffin}.
\end{remark}

\section[Numerical Data]{Numerical Data\footnote{All computations in this section were done with Wolfram Mathematica.}}

In this section, we illustrate the theorems of the previous sections using numerical examples. For simplicity, we limit our examples to cases where \(0 < \alpha < 24\). For such \(\alpha\), it will be convenient to define \begin{align}
    r_\alpha(n; m) & = \frac{\Re\left(p_\alpha\left(n; \frac{2\pi\mu_\alpha(0)}{m+1}\right)\right)}{p_\alpha(n)},
\end{align} the ratio between the real part of the \(m\)-term approximation to \(p_\alpha(n)\) and the actual value. Note that a value of $r_\alpha(n; m)$ closer to 1 indicates that the $m$-term approximation to $r_\alpha(n)$ is more accurate.

By Corollary \ref{asymp}, we know that $p_\alpha(n)$ is asymptotically equivalent to the first term in the series expansion in Theorem~\ref{main theorem} as $n$ goes to infinity. Table~\ref{e, n} displays the accuracy of the first-term expansion for $\alpha = e$ and $n$ varying from 1 to 10. 
Table~\ref{table1} shows the ratio of both the first-term and the five-term approximation to $p_{\alpha}(n)$ where $\alpha = 1/\pi$ and $\alpha = 5$. Note that the sign of the error term \(|p_\alpha(n)-p_\alpha(n; m)|\) is usually periodic with period \(m+1\). This is a consequence of the periodicity of the Kloosterman sums. \\
\FloatBarrier
\begin{table}
\parbox{.45\linewidth}{
\centering
\begin{tabular}{|l|l|l|l|}
\hline $n$ & $p_e(n)$ & $\Re(p_e(n; 1))$ & $r_e(n; 1)$ \\
\hline 
1  & 2.71      & 2.83      & 1.04253 \\
2  & 7.77      & 7.65      & 0.98444 \\
3  & 18.05     & 18.23     & 1.01014 \\
4  & 40.26     & 39.96     & 0.99263 \\
5  & 81.84     & 82.28     & 1.00543 \\
6  & 161.99    & 161.41    & 0.9964  \\
7  & 303.75    & 304.41    & 1.00217 \\
8  & 556.32    & 555.61    & 0.99873 \\
9  & 985.41    & 986.27    & 1.00086 \\
10 & 1710.31   & 1709.07   & 0.99927 \\ \hline
\end{tabular}
\caption{Accuracy of first-term approximations to $p_e(n)$}\label{e, n}
}
\hfill
\parbox{.45\linewidth}{
\centering
\begin{tabular}{|l|l|l|}
\hline 
$m$ & $\Re(p_{1/e}(50; m))$ & $r_{1/e}(50; m)$\\ 
\hline
1  & 356.2898 & 0.997668 \\
2  & 357.2586 & 1.000381 \\
3  & 357.1278 & 1.000014 \\
4  & 357.053  & 0.999805 \\
5  & 357.1236 & 1.000003 \\
6  & 357.1195 & 0.999991 \\
7  & 357.1169 & 0.999984 \\
8  & 357.1208 & 0.999995 \\
9  & 357.1201 & 0.999993 \\
10 & 357.1296 & 1.00002 \\ \hline
\end{tabular}
\caption{Accuracy of $m$-term approximations to $p_{1/e}(50) = 357.1225$}\label{1/e, m}
}
\end{table}

\begin{table}[htb!]
\begin{center}
\begin{tabular}{|l|l|l|l|l|}
\hline
$n$ & $r_{1/\pi}(n; 1)$ & $r_{1/\pi}(n; 5)$ & $r_{5}(n; 1)$ & $r_{5}(n; 5)$ \\
\hline
1  & 1.294180591 & 0.953980957 & 1.015286846 & 1.000097277 \\
2  & 0.970982400 & 0.982523054 & 0.994583967 & 1.000042848 \\
3  & 1.083673986 & 1.018088216 & 1.002732222 & 1.000007177 \\
4  & 0.923295102 & 1.02170408  & 0.998466124 & 0.999992664 \\
5  & 1.124698668 & 1.016001474 & 1.000871244 & 0.999999382 \\
6  & 0.897139773 & 1.004350338 & 0.999524823 & 1.000000088 \\
7  & 1.108496000 & 0.978153497 & 1.000255655 & 1.000000217 \\
8  & 0.943494666 & 1.002688299 & 0.999854031 & 1.000000092 \\
9  & 1.034408356 & 1.003218418 & 1.000093623 & 0.999999982 \\
10 & 0.961090657 & 1.005487344 & 0.999935881 & 0.999999997 \\
11 & 1.076769973 & 0.993996646 & 1.000043109 & 0.999999968 \\
12 & 0.923558631 & 1.005396386 & 0.999972215 & 1.000000007 \\
13 & 1.058750442 & 0.996292489 & 1.000017874 & 1.000000008 \\
14 & 0.980265489 & 0.993723758 & 0.999987986 & 1.000000000 \\
 \hline
\end{tabular}
\caption{Accuracy of approximation to $p_\alpha(n)$ as $n$ increases}\label{table1}
\end{center}
\end{table}

Table~\ref{1/e, m} displays how $p_\alpha(n,m)$ converges to $p_\alpha(n)$ for $\alpha = 1/e, n = 50$, and $1 \leq m \leq 10$. Table~\ref{table2} displays the ratio of the $m$-term approximation of $p_\alpha(n)$ to the actual value for $n = 100$ and various values of $\alpha$ and $m$. As we increase $\alpha$, we see that the relative error of the approximation for $p_\alpha(n)$ decreases. 

\begin{table}[htb!]
\begin{center}
\begin{tabular}{|l|l|l|l|l|}
\hline
$m$ & $r_{0.01}(100; m)$ & $r_{0.1}(100; m)$  & $r_{1}(100; m)$ & $r_{10}(100; m)$  \\
\hline
1  & 0.846079580 & 0.988058877 & 0.999998178 & 1.000000000 \\
2  & 0.969774117 & 0.999386989 & 1.000000009 & 1.000000000  \\
3  & 0.920711483 & 0.997246602 & 0.999999995 & 1.000000000  \\
4  & 0.973881495 & 0.999016179 & 0.999999999 & 1.000000000  \\
5  & 1.040636574 & 1.000923931 & 1.000000000 & 1.000000000  \\
6  & 1.028999226 & 1.000579623 & 1.000000000 & 1.000000000  \\
7  & 1.020829553 & 1.000421683 & 1.000000000 & 1.000000000  \\
8  & 0.995326778 & 0.999817677 & 1.000000000 & 1.000000000  \\
9  & 0.995461037 & 0.999846688 & 1.000000000 & 1.000000000  \\
10 & 1.011689149 & 1.000211135 & 1.000000000 & 1.000000000  \\
\hline
\end{tabular}
\caption{Accuracy of approximation to $p_\alpha(n)$ as number of terms in series increases}\label{table2}
\end{center}
\end{table}

Table~\ref{hermitetable2} depicts the convergence of $\widehat{J}_{p_{\alpha}}^{2, n}(X)$ to the Hermite polynomial $H_2(x) = x^2 - 2$, and the convergence of $\widehat{J}_{p_{\alpha}}^{3, n}(X)$ to the Hermite polynomial $H_3(x) = x^3 - 6x$. Here, \begin{align*}
    A(n) = 2\pi \sqrt{\frac{\alpha}{24n-\alpha}} - \frac{24}{24n-\alpha}, \quad \text{and} \quad \delta(n) = \sqrt{\frac{12\pi\alpha^{\frac{1}{2}}}{(24n-\alpha)^{\frac{3}{2}}} - \frac{288\alpha}{(24n - \alpha)^2}}
\end{align*} as in Theorem~\ref{jensen}, for $\sqrt{3}$. To compute $p_\alpha(n)$ for large $n$, we used the 100-term approximation of our series formula; this is valid for our purposes because by Theorem~\ref{seriesboundlem}, the relative error \(|r_{\sqrt{3}}(n, 100) - 1|\) is bounded by \(10^{-75}\) for the values of \(n\) we consider.

\begin{table}[htb!]
\begin{center}
\begin{tabular}{|l|l|l|}
\hline & & \\[-1em]
    $n$ & $\widehat{J}_{p_{\sqrt{3}}}^{2, n}(x)$ & $\widehat{J}_{p_{\sqrt{3}}}^{3, n} (x)$ \\
    \hline
10000 & $0.999598 x^2 + 0.120905 x -2.03828$    & $ 0.999942 x^3 + 0.0939817 x^2 - 6.03526 x - 0.648632$ \\
20000 & $0.999804 x^2 + 0.0966267 x -2.02711   $ & $0.999971 x^3 + 0.0767061 x^2 - 6.02522 x - 0.543473$  \\
30000 & $0.999871 x^2 + 0.0852795 x - 2.02216$    & $0.999981 x^3 + 0.0683801 x^2 - 6.0207 x - 0.495049$ \\
40000 & $0.999904 x^2 + 0.0782302 x -2.0192$ & $0.999986 x^3 + 0.0631174 x^2 - 6.01799 x - 0.435239$  \\
50000 & $ 0.999923 x^2 + 0.0732538 x - 2.01719$  & $1.00252 x^3 + 0.0595086 x^2 - 6.03131 x - 0.429626$ \\ 
\quad $\vdots$ & \multicolumn{1}{c|}{\vdots} & \multicolumn{1}{c|}{\vdots} \\
\hspace{0.11 cm} $\infty$ & \multicolumn{1}{c|}{$x^2 - 2$} & \multicolumn{1}{c|}{$x^3 - 6x$} \\
\hline
\end{tabular}
\caption{Convergence to the Hermite polynomial of degree 2, $x^2 - 2$, and of degree 3, $x^3 - 6x$}\label{hermitetable2}
\end{center}
\end{table}

\FloatBarrier
In Table~\ref{rationalalpha}, we provide the actual value of $p_{51/7}(n)$ alongside the minimum number \(M_{51/7}(n)\) for which Corollary~\ref{rational} guarantees that \(p_{51/7}(n)\) is given by a suitable rounding of \(p_\alpha\left(n; \frac{2\pi\mu_\alpha(0)}{M_{51/7}(n)+1}\right)\), which has $M_{51/7}(n)$ terms. We also provide $M^\ast_{51/7}(n)$, the minimum number of terms such that this is numerically true.

\begin{table}[htb!]
\begin{center}
\begin{tabular}{|l|l|l|l|}
\hline
    & \vspace{-10 pt} & & \\
    $n$ & $p_{51/7}(n)$ & $M_{51/7}(n;D)$ & $M^*_{51/7}(n;D)$ \\
\hline
1  & 51/7                       & 2   & 1   \\
2  & 1836/49                    & 3   & 2   \\
3  & 52751/343                  & 5   & 3   \\
4  & 1322226/2401               & 8   & 4   \\
5  & 29852442/16807             & 14  & 7   \\
6  & 623075585/117649           & 23  & 10  \\
7  & 85346705106/5764801        & 67  & 26  \\
8  & 1583888229297/40353607     & 114 & 43  \\
9  & 28093059550223/282475249   & 194 & 63  \\
10 & 479246612549889/1977326743 & 330 & 109 \\
\hline
\end{tabular}
\caption{Number of terms for exact solution for $p_{51/7}(n)$}\label{rationalalpha}
\end{center}
\end{table}

\clearpage
\clearpage

\Addresses
\end{document}